\documentclass[11pt, reqno]{amsart}

\usepackage{amsmath}
\usepackage{amssymb}
\usepackage{amsrefs}
\usepackage{amsthm}
\usepackage[english]{babel}
\usepackage[autostyle]{csquotes}
\usepackage{bbm}
\usepackage{caption}
\usepackage[autostyle]{csquotes}
\usepackage{enumerate}
\usepackage[shortlabels]{enumitem}
\usepackage{float}
\usepackage[margin=1.25in]{geometry}
\usepackage[colorlinks=true, urlcolor=blue, linkcolor=black]{hyperref}
\usepackage{tikz}
\usepackage{xcolor}
\usetikzlibrary{backgrounds}
\usepackage{quiver}
\usepackage{ stmaryrd }
  
\theoremstyle{plain}
\newtheorem{thm}{Theorem}[section]
\newtheorem{cor}[thm]{Corollary}

\newtheorem{lem}[thm]{Lemma}

\theoremstyle{definition}
\newtheorem{defn}[thm]{Definition}
\newtheorem{rmk}[thm]{Remark}
\newtheorem{eg}[thm]{Example}

\usetikzlibrary{cd}

\newcommand{\Bun}{\operatorname{Bun}}

\newcommand{\Center}{\operatorname{Z}}

\newcommand{\Coh}{\operatorname{Coh}}

\newcommand{\IndCoh}{\operatorname{IndCoh}}

\newcommand{\End}{\operatorname{End}}

\newcommand{\Hom}{\operatorname{Hom}}

\newcommand{\Loc}{\operatorname{Loc}}

\newcommand{\Mod}{\operatorname{Mod}}

\newcommand{\Perv}{\operatorname{Perv}}

\newcommand{\Tr}{\operatorname{Tr}}
\newcommand{\Vect}{\operatorname{Vect}}

\newcommand{\SO}{\operatorname{SO}}

\newcommand{\opname}[1]{\operatorname{#1}}

\newcommand{\aff}{\operatorname{aff}}

\newcommand{\fr}{\operatorname{fr}}
\newcommand{\id}{\operatorname{id}}

\newcommand{\op}{\operatorname{op}}

\newcommand{\pt}{\operatorname{pt}}

\newcommand{\sh}{\operatorname{sh}}

\newcommand{\bbA}{\mathbb{A}}
\newcommand{\bbC}{\mathbb{C}}
\newcommand{\bbD}{\mathbb{D}}

\newcommand{\bbG}{\mathbb{G}}

\newcommand{\calA}{\mathcal{A}}

\newcommand{\calE}{\mathcal{E}}
\newcommand{\calEnd}{\mathcal{E}\kern -.5pt nd}
\newcommand{\calF}{\mathcal{F}}
\newcommand{\calG}{\mathcal{G}}
\newcommand{\calH}{\mathcal{H}}
\newcommand{\calHom}{\mathcal{H}\kern -.5pt om}

\newcommand{\calK}{\mathcal{K}}
\newcommand{\calL}{\mathcal{L}}
\newcommand{\calM}{\mathcal{M}}
\newcommand{\calN}{\mathcal{N}}
\newcommand{\calO}{\mathcal{O}}

\newcommand{\calS}{\mathcal{S}}

\newcommand{\calZ}{\mathcal{Z}}

\newcommand{\frakg}{\mathfrak{g}}

\newcommand{\catname}[1]{\textbf{#1}}

\newcommand{\bs}{\backslash}
\newcommand{\isomfrom}{\xleftarrow{\sim}}
\newcommand{\isomto}{\xrightarrow{\sim}}

\newcommand{\comment}[1]{}

\newcommand{\xto}[1]{\xrightarrow{#1}}

\newcommand{\inclto}{\hookrightarrow}

\newcommand{\toto}{\rightrightarrows}

\newcommand{\otimess}{\overset{!}{\otimes}}

\def\al#1\eal{\begin{align*}#1\end{align*}}

\title{The Large Affine Hecke Category is Calabi-Yau}
\date{}
\author{Yuji Okitani}

\begin{document}

\begin{abstract}
We show that the large affine Hecke category defines an oriented, fully extended topological field theory. More generally, we establish conditions under which ind-coherent convolution categories define such theories, analogously to known results for finite Hecke categories and $D$-modules.
\end{abstract}

\maketitle

\section{Introduction}
Local geometric Langlands studies categorical loop group representations associated to a reductive group $G$. The tamely ramified part of this is controlled by the affine Hecke category, playing the same role as the affine Hecke algebra in $p$-adic representation theory.

A feature specific to the categorical setting is the choice to work with either small categories such as $\Coh$, or with large categories such as $\IndCoh$. The purpose of this paper is to highlight a difference between small and large affine Hecke categories. We show that the large version is 2-dualizable while the small version is not. We show moreover that the large version admits a Calabi-Yau structure.

The affine Hecke category admits several variants independently to the choice in size. In order to state our main theorem, we will fix one such variant.

\begin{defn}
    The affine Hecke category is the \textit{large} monoidal category,
    \[\calH^{\aff} := \sh_{\text{nilp}}(I^\circ\bs G((t))/I^\circ),\]
    where $I^\circ$ is the pro-unipotent radical of the Iwahori.
\end{defn}

\begin{thm}\label{thm:introAHC}
    The affine Hecke category $\calH^{\aff}$ is 2-dualizable with a natural Calabi-Yau structure, so that it defines a fully extended and oriented 2-dimensional TQFT. In particular, there is a natural identification of the center of $\calH^{\aff}$ with its trace.
\end{thm}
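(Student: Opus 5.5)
Via Bezrukavnikov's equivalence I will replace $\calH^{\aff}$ by an ind-coherent convolution category on the spectral side and prove the result there. The paper's abstract says it establishes conditions under which ind-coherent convolution categories define oriented fully extended theories, so the plan is: state a general criterion, then check its hypotheses in this example.

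\textbf{Step 1 (spectral model).} Bezrukavnikov's theorem, in the large form of Ben-Zvi--Nadler--Preygel, identifies $\calH^{\aff}$ monoidally with
\[\IndCoh(Z), \qquad Z = \widetilde{\calN}\times_{\frakg} \widetilde{\calN}\]
(equivariant for the dual group $\check G$), with convolution along $Z\times_{\widetilde{\calN}} Z \to Z$. Here $\widetilde{\calN}$ is the Springer resolution and the fiber product is derived. So I may work with $\calA = \IndCoh(X\times_Y X)$, where $X = \widetilde{\calN}/\check G \to Y = \frakg/\check G$ is proper and $X$ is smooth.

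\textbf{Step 2 (general criterion).} For a proper map $p: X\to Y$ of quasi-smooth stacks with $X$ smooth, I will show that $\calA$ is 2-dualizable as an algebra object in the Morita 2-category of presentable stable categories.
\begin{itemize}
\item \emph{1-dualizability.} $\IndCoh$ is compactly generated, so $\calA$ is dualizable. Serre duality identifies $\calA^\vee$ with $\IndCoh(X\times_Y X)$ again, with the opposite multiplication.
\item \emph{Rigidity.} Properness of $p$ gives the units and counits needed for the adjoints of multiplication and unit. Concretely, the multiplication $m$ is $*$-pushforward along the proper map $X\times_Y X\times_Y X \to X\times_Y X$ after $!$-pullback, so it has a continuous right adjoint that is $\calA$-bilinear. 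The unit, the pushforward of $\omega_X$ along the diagonal, also has a continuous right adjoint because the diagonal is proper (here I use that $X$ is a quotient of a separated scheme).
\item \emph{Conclusion.} By the criterion of Douglas--Schommer-Pries--Snyder and Gaitsgory--Rozenblyum, a compactly generated, rigid-type (or "proper and smooth") algebra is fully dualizable. This gives 2-dualizability.
\end{itemize}

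\textbf{Step 3 (Calabi-Yau structure).} An $SO(2)$-fixed-point structure amounts to an identification of the Serre automorphism with the identity. For algebras in this setting the Serre functor is tensoring with $\calA^\vee = \calHom(\calA,\mathrm{Vect})$ as a bimodule. I will compute this via Grothendieck--Serre duality on $X\times_Y X$: the dualizing sheaf relative to $X\times X$ is controlled by $\omega_X$ and the relative canonical bundle of $p$. Since $\widetilde{\calN}\to\frakg$ is crepant-like, with $K_{\widetilde{\calN}}$ trivial (the symplectic/Grothendieck resolution structure), the twist is trivial up to a shift, which is $\check G$-equivariantly trivializable. This gives $\calA^\vee\simeq\calA$ as bimodules, and I expect this to be the main obstacle. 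Upgrading it to a genuine $SO(2)$-fixed point, and not merely an equivalence, needs the trivialization to be coherent with the circle action on Hochschild homology. I would try to obtain it from a Calabi-Yau structure on $X$ together with the "relative Calabi-Yau" property of the convolution, invoking the cobordism hypothesis to obtain the oriented fully extended theory. The identification of center with trace then follows formally: $\mathrm{Z}(\calA)=\Hom_{\calA\otimes\calA^{\op}}(\calA,\calA)$ and $\Tr(\calA)=\calA\otimes_{\calA\otimes\calA^{\op}}\calA$ are related by the self-duality $\calA^\vee\simeq\calA$, both being the value on $S^1$.

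\textbf{Contrast with the small version.} The small category $\Coh$ fails the criterion because $\Delta_*\omega$ is not compact in the needed sense when $X\times_Y X$ is singular. This explains why the large version is required.
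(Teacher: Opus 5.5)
\textbf{Step 3 has a genuine gap, and the content of the statement lies there.} As written you obtain at most a \emph{weak} Calabi--Yau structure. You flag the $\SO(2)$-coherence as the main obstacle and only gesture at where it might come from, so the oriented TQFT is not constructed.

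Your route to the weak structure also misidentifies the mechanism. Triviality of $K_{\widetilde{\calN}}$ is not what trivializes the Serre bimodule. $\bbD$ commutes with the proper pushforward $p_{13*}$, and it turns $(p_{12}\times p_{23})^!$ into $(p_{12}\times p_{23})^*$, which by quasi-smoothness is an invertible twist of $!$-pullback. This gives $\bbD(\calK\ast\calL)\simeq\bbD\calK\ast(\bbD1_\calA)^{-1}\ast\bbD\calL$. Hence $\calK^{LL}\simeq\bbD1_\calA\ast\calK\ast(\bbD1_\calA)^{-1}$ is \emph{inner}, and the Serre bimodule is trivial for \emph{every} proper $p$ between smooth stacks. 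Concretely, $\tau(\calK)=\Gamma(X,\delta^!\calK)\simeq\Hom(\bbD1_\calA,\calK)$ satisfies $\tau(\calK\ast\calL)\simeq\Gamma(X\times_YX,\calK\otimess\sigma\calL)$, which is the Serre-duality pairing, so $\tau$ is a non-degenerate trace. A trivialization of $\omega_X$ matters only for the separate \emph{pivotal} structure, i.e.\ a central lift of $\bbD1_\calA$.

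The missing idea for $\SO(2)$-equivariance is the loop-space model $\Tr(\calA)\simeq\IndCoh_\Lambda(LY)$ of Ben-Zvi--Nadler--Preygel, on which the circle acts by loop rotation. Take $\tau=\Gamma(i^!-)$ with $i\colon Y\inclto LY$ the constant loops, i.e.\ the rotation-fixed locus. This functional is manifestly $S^1$-equivariant, and restricted to $\calA$ it is the functional above, hence non-degenerate.

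\textbf{Step 2 asserts rather than proves rigidity.} Continuity of $m^R$ is only quasi-properness of the product. It also needs $p_{12}\times p_{23}$ to be a base change of the quasi-smooth diagonal $X\to X\times X$, not just properness of $p$, since $!$-pullback does not preserve coherence in general. The real content is the $\calA$-bilinearity of $m^R$, equivalently the existence of duals for compact objects, and properness does not give it.

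The duality identity above does. Serre duality plus a rotation of the convolution diagram gives $\Hom(\calL,\calK\ast\calM)\simeq\Hom(\bbD(\sigma\calK\ast\bbD\calL),\calM)$ for compact $\calL$. Hence $\calK^L\simeq\bbD\sigma\calK\ast(\bbD1_\calA)^{-1}$, and symmetrically $\calK^R\simeq(\bbD1_\calA)^{-1}\ast\bbD\sigma\calK$. The unit is compact because the \emph{relative} diagonal $X\to X\times_YX$ is a closed embedding and $\omega_X$ is compact. Ben-Zvi--Nadler's theorem (semi-rigid implies 2-dualizable in the presentable Morita 2-category) then applies. Douglas--Schommer-Pries--Snyder concerns finite tensor categories and is not the relevant input.

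\textbf{Step 1 uses the wrong spectral model for this $\calH^{\aff}$.} $\IndCoh((\widetilde{\calN}\times_{\frakg}\widetilde{\calN})/\check G)$ is the ind-completed category of Bezrukavnikov's original Iwahori-equivariant equivalence. By contrast, $\sh_{\text{nilp}}(I^\circ\bs G((t))/I^\circ)$ is identified, via Dhillon--Taylor's universal monodromic equivalence, with $\IndCoh(X\times_YX)$ for $X=B^\vee/B^\vee\to Y=G^\vee/G^\vee$. The general argument covers both, so this is repairable, but only the latter gives $LY=\Loc_{G^\vee}(T^2)$.
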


Again, the emphasis is that $\calH^{\aff}$ is a \textit{large} monoidal category; our results continue to hold for some other variants of large affine Hecke categories. See Remark~\ref{rmk:variant} for a discussion on this.

\subsection{Idea of the Proof}
We work with the Langlands dual description of the affine Hecke category, under the universal variant of Bezrukavnikov's equivalence due to Dhillon and Taylor \cite{dhillonTameLocalBetti2025}. This presents $\calH^{\aff}$ as a convolution category of ind-coherent sheaves. We show that under suitable conditions, all such convolution categories are $2$-dualizable and admit a natural Calabi-Yau structure.

The strategy of the proof is an adaptation of the work of Ben-Zvi and Nadler \cite{ben-zviCharacterTheoryComplex2015}, who prove the 2-dualizability and Calabi-Yau property for the finite Hecke category, by studying general $D$-module convolution categories.

\subsection{Small vs Large}
The difference between small and large affine Hecke categories is most clearly demonstrated at the level of traces and centers. The following table gives spectral descriptions of these two invariants, where $G^\vee$ denotes the Langlands dual group and $E:=S^1\times S^1$.

\begin{center}
\begin{tabular}{ c|c|c } 
 & Small & Large \\ 
\hline
 Trace & $\Coh_{\calN}(\Loc_{G^\vee}(E))$ & $\IndCoh_{\calN}(\Loc_{G^\vee}(E))$ \\ 
 Center & $\Coh_{\text{prop}/{\frac{G^\vee}{G^\vee}}}(\Loc_{G^\vee}(E))$ & $\IndCoh_{\calN}(\Loc_{G^\vee}(E))$ \\ 
\end{tabular}
\end{center}

\begin{itemize}
    \item The left column of the table is calculated by Ben-Zvi, Nadler and Preygel in \cite{ben-zviSpectralIncarnationAffine2017}. They are each categories of coherent sheaves on the same scheme, but with conditions of starkly different flavors. The trace is given by a certain singular support constraint, which controls the manner in which the coherent sheaf may fail to be perfect. The condition in the center imposes that the sheaf remains coherent after pushforward along a certain morphism.
    \item The top row of the table comes from the observation that taking trace commutes with taking ind-completion, each being colimit constructions.
    \item The right column of the table then follows from Theorem~\ref{thm:introAHC}.
\end{itemize}

We learn that the center depends on the choice of size and agrees with the trace only in the case of the large version.

\subsection{Affine Character Sheaves}
The trace of the affine Hecke category is also known as the category of affine character sheaves and has been a focal object in categorical representation theory and local geometric Langlands \cites{ben-zviCoherentSpringerTheory2024,ben-zviEllipticSpringerTheory2015,ben-zviSpectralIncarnationAffine2017,bouthierPerverseSheavesInfinitedimensional2022,gorskyTraceAffineHecke2022}.

The center of the affine Hecke category in the context of local geometric Langlands is less well studied. We argue that this is in part due to subtleties described above. If we are willing to work with the large affine Hecke category then the center will admit a theory as well-behaved as, and which should enhance that of affine character sheaves.

\begin{rmk}
    The center of the affine Hecke category has been studied in the work of Gaitsgory \cite{gaitsgoryConstructionCentralElements2000}, and this is applied in \cite{bezrukavnikovCategoricalApproachStable2018}. That said, for both the primary objective is to produce many central elements in the affine Hecke \textit{algebra}. See also Remark~\ref{rmk:gaitsgory}.
    
    Many other works study the center, as well as the trace of the affine Hecke algebra. For example \cites{cautisCenterGenericAffine2025,ciubotaruCocentersRepresentationsAffine2014,heCentersCocenters$0$Hecke2015}.
\end{rmk}

\subsection{Application to 3d Mirror Symmetry}
In a different but closely related direction, our general results on ind-coherent convolution categories have applications to the 3d mirror symmetry of Gammage, Hilburn and Mazel-Gee \cite{gammagePerverseSchobers3d2023}. Their mirror symmetry is formulated as an equivalence of 2-categories, where the $A$-side is modelled by the 2-category $2\Perv(\bbC,0)$ of perverse schobers on a disc. In parallel to the story in geometric Langlands, the $B$-side is modelled by the 2-category of modules for a certain convolution category of ind-coherent sheaves. In the same way as for the affine Hecke category, we deduce the following.

\begin{thm}\label{thm:intro3d}
    The 2-category $2\Perv(\bbC,0)$ is 2-dualizable with a natural Calabi-Yau structure, so that it defines a fully extended and oriented 2-dimensional TQFT. In particular, there is a natural identification of the center of $2\Perv(\bbC,0)$ with its trace.
\end{thm}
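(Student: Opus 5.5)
We deduce Theorem~\ref{thm:intro3d} from the general results on ind-coherent convolution categories, in the same way as Theorem~\ref{thm:introAHC}. The first step is to invoke the 3d mirror symmetry equivalence of Gammage--Hilburn--Mazel-Gee \cite{gammagePerverseSchobers3d2023}. It identifies $2\Perv(\bbC,0)$ with the 2-category of modules over a monoidal category
\[\calA = \IndCoh(X\times_Y X),\]
where $X\to Y$ is the relevant map of (derived) stacks on the $B$-side, with the convolution structure. For the affine Hecke category the analogous input is the Dhillon--Taylor form of Bezrukavnikov's equivalence. Since 2-dualizability and Calabi-Yau structures are invariant under equivalence of 2-categories, it suffices to prove that $\Mod_{\calA}$ is 2-dualizable and Calabi-Yau. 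Equivalently, it suffices to prove this for the monoidal category $\calA$ itself, regarded as an object of the Morita 3-category of algebras in $\catname{Pr}^L$.

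Second, we verify that $\calA$ satisfies the hypotheses of the general theorem on ind-coherent convolution categories established later in the paper. Here we follow the Ben-Zvi--Nadler template: the hypotheses should be that $p\colon X\to Y$ is proper (so that $p_*^{\IndCoh}$ admits the appropriate adjoints), that $X$ is smooth or at least quasi-smooth with $Y$ laft and having affine diagonal (so that the $\IndCoh$ Künneth formula and the identification $\IndCoh(X\times_Y X)\simeq \End_{\IndCoh(Y)}(\IndCoh(X))$ hold), and that $p$ is surjective enough that $\IndCoh(X)$ generates. In the GHMG setting, I expect $X\to Y$ to be of the form $\bbA^1/\bbG_m \to \pt/\bbG_m$ or a closely related map of quotient stacks. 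Properness may then fail literally, and one should instead use that the map is a quotient of a proper map by a reductive group, or replace it by a categorically proper variant. Checking these hypotheses concretely, in particular the quasi-smoothness and the properness or finiteness conditions, is the main obstacle. I would first try to realize the $B$-side convolution category as $\IndCoh$ of a self-fiber product over $\pt/\bbG_m$ of a proper map, and then check each hypothesis by hand.

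Third, we apply the general theorem. Under these hypotheses, $\calA$ is Morita equivalent to $\End_{\IndCoh(Y)}(\IndCoh(X))$. The unit and counit exhibiting 2-dualizability come from the bimodule $\IndCoh(X)$ together with its dual, which is available by self-duality of $\IndCoh$ on laft stacks. The required adjoints exist because $p_*$ and $p^!$ form an adjoint pair for proper $p$. The Calabi-Yau structure comes from the canonical trace $\calA\to\Vect$ given by $\Gamma(X\times_Y X, \Delta^! -)$, or equivalently from the Serre functor being trivialized by Grothendieck duality. The identification of center and trace then follows formally, since for a Calabi-Yau 2-dualizable object the circle-valued invariant can be computed from either orientation. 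Concretely, both are computed as $\IndCoh$ of the derived loop space $\mathcal{L}(Y)\times_Y X$-type fiber product.
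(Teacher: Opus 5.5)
Your proposal has a genuine gap: your mechanism for 2-dualizability does not work. You route it through the identification $\IndCoh(X\times_YX)\simeq\End_{\IndCoh(Y)}(\IndCoh(X))$ and a Morita-type argument with the bimodule $\IndCoh(X)$. For $\IndCoh$ over a non-transverse fiber product that identification is not available, and it fails in exactly this case.

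The input from Gammage--Hilburn--Mazel-Gee is $X=(\bbA^1\sqcup 0)/\bbG_m\to Y=\bbA^1/\bbG_m$. This map is the identity on one component and the closed embedding of the origin on the other, so $p$ is honestly proper between smooth QCA stacks. Your guess $\bbA^1/\bbG_m\to\pt/\bbG_m$ is not proper, and no quotient-by-a-reductive-group trick repairs that.

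Since $X$ and $Y$ are smooth, $\End_{\IndCoh(Y)}(\IndCoh(X))\simeq\End_{\operatorname{QCoh}(Y)}(\operatorname{QCoh}(X))\simeq\operatorname{QCoh}(X\times_YX)$. But $X\times_YX$ contains the derived component $(0\times_{\bbA^1}0)/\bbG_m$, on which $\operatorname{QCoh}\neq\IndCoh$. As a consistency check: if $\IndCoh(X)$ made $\calA$ Morita equivalent to $(\IndCoh(Y),\otimess)\simeq\operatorname{QCoh}(Y)$, the trace would be $\operatorname{QCoh}(LY)$. The actual trace is $\IndCoh(L(\bbA^1/\bbG_m))$, with $LY$ singular. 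Separately, the adjunction $p_*\dashv p^!$ says nothing by itself about adjointability of the evaluation and coevaluation bimodules in the Morita 2-category.

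The missing idea is rigidity, combined with the Ben-Zvi--Nadler criterion that semi-rigid monoidal categories are 2-dualizable. The paper checks three things:
\begin{enumerate}
\item The unit $\delta_*\omega_X$ is compact, because $\delta$ is a closed embedding.
\item Convolution preserves compact objects, because $p_{13}$ is a base change of the proper map $p$ and $p_{12}\times p_{23}$ is a base change of the quasi-smooth diagonal of $X$.
\item Every compact $\calK$ has explicit duals $\calK^L=\bbD\sigma\calK\ast(\bbD1_\calH)^{-1}$ and $\calK^R=(\bbD1_\calH)^{-1}\ast\bbD\sigma\calK$. These come from rewriting $\Hom(\calL,\calK\ast\calM)$ via Serre duality, rotating the kernel, and using $\bbD(\calK\ast\calL)\simeq\bbD\calK\ast(\bbD1_\calH)^{-1}\ast\bbD\calL$. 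That formula in turn comes from $q^*\simeq\alpha_q\otimess q^!$ for the quasi-smooth map $q=p_{12}\times p_{23}$.
\end{enumerate}
None of this appears in your argument, and it is what replaces your Morita step.

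For the Calabi-Yau structure, your functional $\Gamma(X,\delta^!-)\simeq\Hom(\bbD1_\calA,-)$ is the right one on objects of $\calA$. However, you need it as an $S^1$-equivariant, non-degenerate functional on $\Tr(\calA)$, and you give no argument for the factorization, the equivariance, or the non-degeneracy. The paper obtains it as $\Tr(\calA)\simeq\IndCoh_\Lambda(LY)\inclto\IndCoh(LY)\xto{\Gamma(i^!-)}\Vect$, where $i$ is the inclusion of constant loops. Non-degeneracy comes from the Serre self-pairing on $X\times_YX$. Trivializing the Serre functor via Grothendieck duality would give only a weak Calabi-Yau structure, which does not produce an oriented theory. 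Finally, the trace is $\IndCoh_\Lambda(LY)$, not $\IndCoh$ of a fiber product like $LY\times_YX$.
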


The trace of this category is calculated in \cite{gammageBettiTatesThesis2025}. As before, our results indicate that the center is also worth studying.

\subsection{Organization of the Paper}
In Section~\ref{section:tft}, we recall some definitions in topological field theory as developed in \cite{lurieClassificationTopologicalField2009}. We will specialize to a particular choice of symmetric monoidal 2-category and recall the dualizability criterion for monoidal categories from \cite{ben-zviCharacterTheoryComplex2015}.

In Section~\ref{section:indcoh}, we outline the algebro-geometric setting we will work in. We then review properties of ind-coherent sheaves. These will follow \cites{gaitsgoryIndcoherentSheaves2012,drinfeldFinitenessQuestionsAlgebraic2012}.

In Section~\ref{section:conv} we prove the main technical results in the general setting of ind-coherent convolution categories.

In Section~\ref{section:applications} we apply these technical results to the affine Hecke category and prove Theorem~\ref{thm:introAHC}. We also outline our application to 3d mirror symmetry and prove Theorem~\ref{thm:intro3d}.

\subsection{Acknowledgments}
I am grateful to my PhD advisor David Nadler for many helpful discussions and for his continued support and open encouragement. This work was partially supported by the Simons Dissertation Fellowship.

\section{Topological Quantum Field Theories}\label{section:tft}
In this section, we provide a review of the language of topological quantum field theories, following \cite{lurieClassificationTopologicalField2009}. We then restrict our attention to TQFTs defined by monoidal categories and recall the key theorem of \cite{ben-zviCharacterTheoryComplex2015} needed to prove the dualizability of the affine Hecke category.

We will implicitly work with $(\infty,1)$-categories and $(\infty,2)$-categories.

\subsection{Review of TQFTs}
Topological quantum field theories are devices which encode structures, for example associated to a category, by interpreting these structures as invariants assigned to manifolds.

Given a symmetric monoidal 2-category $\catname{S}$ a fully extended framed 2-dimensional TQFT (abbreviated to framed 2TQFT or just 2TQFT) valued in $\catname{S}$ is a symmetric monoidal functor,
\[\calZ:\catname{Bord}^{\fr}_2\to\catname{S},\]
where $\catname{Bord}^{\fr}_2$ is the symmetric monoidal bordism 2-category of framed manifolds. Lurie's cobordism hypothesis states that the choice of $\calZ$ is equivalent to the choice of a 2-dualizable object in $\catname{S}$. This object is just $\calZ(\pt)$ where $\pt$ is the point with the standard framing.

\subsection{Traces and Centers}
Let us consider a 2TQFT $\calZ$, classified by a 2-dualizable object $A = \calZ(\pt)$. An important example of structure encoded by $\calZ$ is the center and trace of $A$, which we will now define. Denote by $S^1_{\Tr}$ the $S^1$ with the cylinder framing- this is the unique framing for which $S^1_{\Tr}$ admits an $\SO(2)$-action by rotation. Denote by $S^1_{\Center}$ the $S^1$ equipped with the annulus framing- this is the unique framing which admits the cup bordism from the empty framed 1-manifold. The trace of $A$ is defined to be
\[\Tr(A) := \calZ(S^1_{\Tr}),\]
and the center of $A$ is defined to be
\[\Center(A) := \calZ(S^1_{\Center}).\]
These are each objects inside the symmetric monoidal category $\End_{\catname{S}}(1_{\catname{S}})=:\Omega\catname{S}$. The 2TQFT encodes further structures. For example,
\begin{itemize}
    \item $\Tr(A)$ admits an $\SO(2)$-action,
    \item $\Center(A)$ admits an $\calE_2$-monoidal structure from the pair of pants construction,
    \item $\Tr(A)$ admits a structure of an $\calE_2$-module over $\Center(A)$ from a different pair of pants construction.
\end{itemize}

\subsection{Oriented TQFTS}
Often, there is a natural way in which to descend invariants from framed manifolds to oriented manifolds, along the forgetful functor $\catname{Bord}^{\opname{fr}}_2\to\catname{Bord}^{\opname{or}}_2$. An oriented 2TQFT valued in $\catname{S}$ is a symmetric monoidal functor,
\[\calZ:\catname{Bord}^{\opname{or}}_2\to\catname{S},\]
where $\catname{Bord}^{\opname{or}}_2$ is the symmetric monoidal bordism 2-category of oriented manifolds.

In \cite{lurieClassificationTopologicalField2009}*{Remark 4.2.7} it is shown that the choice of an oriented 2TQFT is equivalent to the choice of
\begin{itemize}
    \item A 2-dualizable object $A$ defining the underlying framed 2TQFT, and
    \item an $\SO(2)$-equivariant and non-degenerate trace functional $\tau:\Tr(A)\to 1_{\Omega\catname{S}}$ in $\Omega\catname{S}$. We refer to this as a Calabi-Yau structure on $A$.
\end{itemize}

In $\catname{Bord}^{\opname{or}}_2$, the framed circles $S^1_{\Center}$ and $S^1_{\Tr}$ become identified, and so we get an identification,
\[\Center(A) \isomto \Tr(A).\]

\subsection{Weakly Oriented TQFTS}
Let $A$ be a 2-dualizable object. The first obstruction for $A$ to admit a Calabi-Yau structure is given by the Serre automorphism $S_A:A\to A$. This is the 1-morphism assigned to the interval along which the framing makes one rotation. A weak Calabi-Yau structure is the choice of a trivialization $\id_A\isomto S_A$ in the space of automorphisms of $A$. As the name suggests this is a weaker structure, but is easier to work with and is in fact enough to write down an identification between the center and trace as above.

In parallel to Calabi-Yau structures, a weak Calabi-Yau structure is equivalent to the choice of a non-degenerate trace functional $\tau:\Tr(A)\to 1_{\Omega\catname{S}}$, with no $\SO(2)$-equivariance.

\subsection{Specializing to the Morita 2-Category}
Let $k$ be a field of characteristic zero. Denote by $\catname{St}_{k}$ the symmetric monoidal $1$-category of stable presentable $k$-linear $1$-categories, with morphisms given by colimit preserving functors. Consider the Morita $2$-category $\catname{S}:=\catname{Alg}(\catname{St}_{k})$. Recall that $\catname{S}$ has objects given by monoidal objects in $\catname{St}_{k}$, and morphisms given by the category of bimodules. This has a symmetric monoidal structure from tensor product of $k$-linear monoidal categories, with unit object $\Vect_{k}$. We study 2TQFTs valued in $\catname{S}$.

\begin{rmk}
    This will assign to closed 2-manifolds vector spaces and not numerical invariants. See also Remark~\ref{rmk:approx}.
\end{rmk}

For an object $\calA\in \catname{S}$, the trace and center are given by
\begin{align*}
    \Tr(\calA)&=\calA\otimes_{\calA\otimes\calA^{\op}}\calA,\\
    Z(\calA)&=\Hom_{\calA\otimes\calA^{\op}}(\calA,\calA).
\end{align*}

Let us now recall some properties of monoidal categories which are not Morita invariant. In other words, these properties are not intrinsic to the Morita category $\catname{S} = \catname{Alg}(\catname{St}_{k})$. One can think of these as depending on the choice of a 1-morphism, $\calA\to\Vect_{k}$, given by $\calA$ viewed as a bimodule between $\calA$ and $\Vect_{k}$. We first have an extrinsic finiteness notion.

\begin{defn}
    A monoidal category $\calA$ in $\catname{Alg}(\catname{St}_{k})$ is said to be semi-rigid it is compactly generated and every compact object $a\in\calA$ admits left and right monoidal duals $a^L,a^R$. We say that $\calA$ is rigid if moreover the monoidal unit $1_\calA\in\calA$ is compact.
\end{defn}

We may extend the assignments $a\mapsto a^L,a^R$ into monoidal equivalences $-^L,-^R:\calA\to\calA^{\op}$. We also have an extrinsic analogue of the Calabi-Yau notion.

\begin{defn}
    A pivotal structure on a semi-rigid monoidal category $\calA$ in $\catname{Alg}(\catname{St}_{k})$ is a monoidal identification, of the monoidal autoequivalence $-^{LL}$ with the identity of $\calA$.
\end{defn}

The following result from \cite{ben-zviCharacterTheoryComplex2015} is the key theorem connecting the extrinsic and intrinsic finiteness and Calabi-Yau notions.

\begin{thm}[{\cite{ben-zviCharacterTheoryComplex2015}*{Theorem 3.18}}]
    A semi-rigid monoidal category $\calA$ is 2-dualizable, and a pivotal structure on $\calA$ induces a weak Calabi-Yau structure on $\calA$.
\end{thm}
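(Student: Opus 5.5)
The plan is to check $2$-dualizability directly in the Morita $2$-category $\catname{S}=\catname{Alg}(\catname{St}_k)$. Every object $\calA$ is $1$-dualizable, with dual $\calA^{\op}$. The evaluation and coevaluation are $\calA$ itself, regarded as a $(\calA\otimes\calA^{\op},\Vect_k)$-bimodule and as a $(\Vect_k,\calA\otimes\calA^{\op})$-bimodule. So $2$-dualizability reduces to showing that these two $1$-morphisms have both left and right adjoints in $\catname{S}$. Write $\calA^e:=\calA\otimes\calA^{\op}$. A bimodule has adjoints in the Morita category exactly when it is dualizable as a module over each of the algebras acting on it. So the task is to show that $\calA$ is dualizable as a left and as a right $\calA^e$-module, and to identify the duals.

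\textbf{Step 1: the multiplication has a linear right adjoint.} Let $m:\calA\otimes\calA\to\calA$ be the multiplication. Since $\calA$ is compactly generated, $\calA\otimes\calA$ is compactly generated by the objects $a\boxtimes b$ with $a,b$ compact. Duals of compact objects are compact, and $ab$ is compact for compact $a,b$ (its mapping spaces can be rewritten using duals). Hence $m$ preserves compact objects, and its right adjoint $m^R$ preserves colimits.

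The key claim is that $m^R$ is $\calA$-bimodule linear, i.e.
\[m^R(a\,c\,b)\simeq (a\boxtimes 1)\,m^R(c)\,(1\boxtimes b).\]
It suffices to prove this for compact $a,b$, by continuity. For compact $a$, the identity
\[\Hom(x\,y,\ a\,c)\simeq \Hom(a^L x\,y,\ c)\]
lets the lax linearity map be tested against compact generators $x\boxtimes y$, and there it becomes an equivalence. This is the one place where semi-rigidity is used essentially. Because the unit need not be compact, the argument must not use $1_\calA$ as a compact object. I expect this to be the main obstacle.

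\textbf{Step 2: dualizability of $\calA$ over $\calA^e$.} Using $m^R$, I will exhibit $\calA$ as a dualizable left $\calA^e$-module. The candidate dual is $\calA$ with its bimodule structure twisted by $-^R$; symmetrically, for the right-module structure the twist is by $-^L$. The duality data are as follows.
\begin{itemize}
\item The counit is built from $m$ followed by the continuous functor $\Hom(1_\calA,-)$.
\item The unit is built from $m^R(1_\calA)$.
\end{itemize}
The triangle identities reduce, by Step 1, to the statement that $m\circ m^R$ and the bimodule structures are compatible. This can be checked on compact generators using the duality isomorphisms $a\mapsto a^L,a^R$. This yields left and right adjoints for both the evaluation and the coevaluation, so $\calA$ is $2$-dualizable.

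\textbf{Step 3: the Serre automorphism and the pivotal structure.} The Serre automorphism $S_\calA$ is a composite of evaluation and coevaluation with one of these adjoints inserted. From the explicit duals in Step 2 it comes out as the bimodule $\calA$ twisted on one side by $-^{LL}$ (or its inverse, depending on conventions). A pivotal structure is a monoidal identification $-^{LL}\simeq\id$, so it identifies this twisted bimodule with the diagonal bimodule $\calA=\id_\calA$. That identification is a trivialization $\id_\calA\isomto S_\calA$, which is a weak Calabi-Yau structure.

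The remaining bookkeeping is to check that the left and right twists, and the monoidality of the pivotal structure, are what make this an equivalence of bimodules rather than of plain categories.
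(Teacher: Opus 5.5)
The paper gives no proof of this statement; it cites Ben-Zvi--Nadler. Your outline is essentially their route: properness from compact generation, smoothness from a continuous bimodule-linear $m^R$, the Serre bimodule as the $-^{LL}$-twisted diagonal, and a pivotal structure trivializing it. Step 1 is fine as written, and it never uses the unit. Your reduction also silently drops dualizability of $\calA$ over $\Vect_k$, which is needed for the other two adjoints of evaluation and coevaluation; it is immediate from compact generation.

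\textbf{The gap is in Step 2, which is where non-compactness of $1_\calA$ actually bites.} You build the counit from $m$ followed by ``the continuous functor $\Hom(1_\calA,-)$''. But $\Hom(1_\calA,-)$ preserves filtered colimits if and only if $1_\calA$ is compact, and semi-rigidity does not assume this. Precomposing with $m$ does not help, since $\Hom(1_\calA,m(1_\calA\boxtimes z))\simeq\Hom(1_\calA,z)$. So in the non-rigid case your counit is not a morphism of $\catname{St}_k$.

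\textbf{How to repair it.} Let $\tau:\calA\to\Vect_k$ be the continuous functor agreeing with $\Hom(1_\calA,-)$ on compact objects.
\begin{itemize}
\item Since $m$ preserves compacts, $\tau\circ m$ is the ind-extension of $(a,b)\mapsto\Hom(1_\calA,ab)\simeq\Hom(a^L,b)$. This is a perfect pairing because $a\mapsto a^L$ is an anti-autoequivalence of $\calA^c$.
\item On compacts, $\Hom(x\boxtimes y,m^R(1_\calA))\simeq\Hom(xy,1_\calA)\simeq\Hom(x,y^L)$. So your unit $m^R(1_\calA)$ is indeed the matching coevaluation.
\item Step 3 needs the twisted cyclicity $\tau(ab)\simeq\Hom(a^L,b)\simeq\tau(b\,a^{LL})$. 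Prove it for compact $a,b$ and extend by continuity.
\item Given a pivotal structure, this makes $\tau$ a non-degenerate trace functional, i.e.\ the weak Calabi--Yau structure. In the rigid case $\tau=\Hom(1_\calA,-)$, which is the paper's $\tau^{\opname{piv}}$.
\end{itemize}

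\textbf{A related point.} You assert that duals of compact objects are compact. The evident argument, $\Hom(a^L,-)\simeq\Hom(1_\calA,a\otimes-)$, again needs $1_\calA$ compact. Treat this as part of the semi-rigid hypothesis; the paper's extension of $-^L,-^R$ to all of $\calA$ presupposes it. The perfect pairing above depends on it.
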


\section{Ind-coherent Sheaves}\label{section:indcoh}
In this section, we will provide a short summary of some properties of ind-coherent sheaves on stacks following \cites{gaitsgoryIndcoherentSheaves2012,drinfeldFinitenessQuestionsAlgebraic2012}. We will list the properties of $\IndCoh$ which we use in Section~\ref{section:conv}.

\subsection{Assumptions on Stacks}
We work with derived schemes and stacks over a field $k$ of characteristic zero. We assume all stacks are algebraic stacks mainly for the sake of simplicity. We also assume our algebraic stacks are (locally) almost of finite type and QCA, technical conditions that ensure we have a theory of ind-coherent sheaves with good functoriality and dualizability properties.

\begin{eg}
    Let $U$ be a scheme, almost of finite type with an action by $H$, a finite type affine group scheme. Then the global quotient $U/H$ is a QCA algebraic stack, almost of finite type. The relevant stacks in our two applications will be of this form.
\end{eg}

To a scheme $X$ almost of finite type, we associate the category $\IndCoh(X)$ defined to be the ind-completion of $\Coh(X)$. We may extend this and define $\IndCoh(X)$ for $X$ almost finite type, QCA algebraic stack via Kan extension.

\subsection{Properties}

We list below properties of $\IndCoh(X)$. We recall here that a morphism of stacks $p:X\to Y$ is proper if it is schematic and the pullback of $p$ to any scheme $S$ mapping to $Y$ is proper.

\begin{itemize}
    \item $\IndCoh(X)$ admits a symmetric monoidal product $\otimess$.
    \item For any morphism $f:X\to Y$, we have a continuous functors
        \begin{align*}
            f^!:\IndCoh(Y)\to\IndCoh(X),\\
            f_*:\IndCoh(X)\to\IndCoh(Y).
        \end{align*}
    \item $f^!$ is symmetric monoidal with respect to $\otimess$.
    \item For any Cartesian square,
        \[\begin{tikzcd}
            X' & X \\
            Y' & Y
            \arrow["u", from=1-1, to=1-2]
            \arrow["f'", from=1-1, to=2-1]
            \arrow["f", from=1-2, to=2-2]
            \arrow["v", from=2-1, to=2-2]
            \arrow[from=1-1, to=2-2, phantom, "\square"]
        \end{tikzcd}\]
        there is a base-change isomorphism $v^!f_*\simeq f'_*u^!$.
    \item For $f:X\to Y$, we have the projection formula $f_*(-\otimess (f^!-))\simeq (f_*-)\otimess-$.
    \item For a proper morphism $p:X\to Y$, $p_*$ is quasi-proper, that is, sends compact objects to compact objects. Moreover, $p_*$ is left adjoint to $p^!$. Recall here, proper morphisms between stacks are schematic by definition.
    \item For a morphism $q:X\to Y$ which is eventually coconnective, there is an additional pullback functor
        \[q^*:\IndCoh(Y)\to\IndCoh(X),\]
    which is continuous, quasi-proper and left adjoint to $q_*$. If furthermore $q$ is quasi-smooth, then $\alpha_q:= q^*\omega_Y$ is $\otimess$-invertible and the natural transformation
    \[q^*-\to \alpha_q\otimess q^!-\]
    is an equivalence.
    \item For any Cartesian square,
        \[\begin{tikzcd}
            X' & X \\
            Y' & Y
            \arrow["u", from=1-1, to=1-2]
            \arrow["q'", from=1-1, to=2-1]
            \arrow["q", from=1-2, to=2-2]
            \arrow["v", from=2-1, to=2-2]
            \arrow[from=1-1, to=2-2, phantom, "\square"]
        \end{tikzcd}\]
        where $q$ is quasi-smooth, there is a base-change isomorphism $u^!\alpha_q\simeq\alpha_{q'}$.
\end{itemize}

\begin{rmk}
    While the last two points may appear unfamiliar, it is natural from the point of view of six functor formalisms. Using terminology of \cite{scholzeSixFunctorFormalisms2025}- the assertion is that if $q$ is quasi-smooth then it is cohomologically smooth. We can compare to more familiar notation and observe that $\alpha_q\simeq \bbD\omega_q$. We also note that if $Y=\pt$ then $\alpha_q \simeq \calO_X$.
\end{rmk}

We have in addition the following properties.
\begin{itemize}[resume]
    \item The category $\IndCoh(X)$ is compactly generated. In particular, there is an identification $\IndCoh(X)^{\op}\isomto\IndCoh(X)^\vee$, given by $\calF\mapsto\Hom(\calF,-)$ for $\calF\in\Coh(X)$.
    \item The functor $-\boxtimes-:\IndCoh(X)\otimes\IndCoh(Y)\isomto\IndCoh(X\times Y)$ is an equivalence, where
        \[\calF\boxtimes\calG := p_X^!\calF\otimess p_Y^!\calG.\]
    \item There is a self-duality $\IndCoh(X)\isomto\IndCoh(X)^\vee$, given by $\calF\mapsto\Gamma(\calF\otimess-)$.
    \item Comparing the two descriptions of $\IndCoh(X)^\vee$ gives us the Serre duality $\bbD_X:\IndCoh(X)\to\IndCoh(X)^{\op}$, which is an involutive autoequivalence and satisfies 
        \[\Hom(\bbD_X\calF,-)\simeq\Gamma(\calF\otimess-)\]
    for $\calF\in\Coh(X)$.
    \item For $p:X\to Y$ proper, $\bbD_Yp_*\bbD_X\simeq p_*$, and for $q:X\to Y$ eventually coconnective, $\bbD_Xq^!\bbD_Y\simeq q^*$.
\end{itemize}
These properties and further compatibilities are encoded in a six functor formalism.

\section{Convolution Categories}\label{section:conv}
In this section, we study ind-coherent convolution categories and prove the main technical results needed for Theorem~\ref{thm:introAHC}.

\subsection{Setting}
Let $p:X\to Y$ be a proper morphism between smooth QCA algebraic stacks almost of finite type over a field $k$ of characteristic zero. We define a monoidal structure under convolution on $\calH := \IndCoh(X\times_YX)$ by the usual formulae,
\begin{align*}
    1_{\calH} &:= \delta_*\omega_X,\\
    \calK\ast\calL &:= p_{13*}(p_{12}\times p_{23})^!(\calK\boxtimes\calL)
\end{align*}
where $\delta:X\to X\times_YX$ is the diagonal and we have maps as follows.
\[\begin{tikzcd}
	&& {X\times_YX\times_YX} \\
	{X\times_YX} && {X\times_YX} && {X\times_YX}
	\arrow["{p_{12}}"', from=1-3, to=2-1]
	\arrow["{p_{13}}"', from=1-3, to=2-3]
	\arrow["{p_{23}}", from=1-3, to=2-5]
\end{tikzcd}\]
Let us also write $p_1,p_2:X\times_YX\toto X$ for the two projections. We remark that to define the full monoidal structure with all higher associativity data, we should define $X\times_YX$ as a monoidal object in a category of correspondences of derived stacks and pass to ind-coherent sheaf categories.

\begin{lem}\label{lem:qproper}
    The monoidal product is quasi-proper.
\end{lem}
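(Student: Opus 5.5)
To prove Lemma~\ref{lem:qproper}, I would write the convolution functor $\calH\otimes\calH\to\calH$ as a composite of three continuous functors and check that each one preserves compact objects. Using the identification $\IndCoh(X\times_YX)\otimes\IndCoh(X\times_YX)\isomto\IndCoh((X\times_YX)\times(X\times_YX))$ given by $\boxtimes$, the product becomes
\[
\IndCoh\bigl((X\times_YX)^2\bigr)\xto{(p_{12}\times p_{23})^!}\IndCoh(X\times_YX\times_YX)\xto{p_{13*}}\IndCoh(X\times_YX).
\]
Compact objects of $\calH\otimes\calH$ are generated under finite colimits and retracts by external products $\calK\boxtimes\calL$ with $\calK,\calL\in\Coh$. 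Such a product is compact in $\IndCoh$ of the product, by the $\boxtimes$-equivalence. So it suffices to show that the two displayed functors preserve compact objects.

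The pushforward $p_{13*}$ is easy. The map $p_{13}:X\times_YX\times_YX\to X\times_YX$ is a base change of $p:X\to Y$ (forgetting the middle factor), and proper morphisms are stable under base change. Hence $p_{13*}$ is quasi-proper by the listed properties of $\IndCoh$.

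The main obstacle is the $!$-pullback, since $!$-pullback does not preserve coherence in general. The plan is to identify $p_{12}\times p_{23}$ as a base change of the diagonal $\Delta_X:X\to X\times X$ along the projection $(X\times_YX)^2\to X\times X$ onto the second and third factors. That is, $X\times_YX\times_YX\simeq (X\times_YX)^2\times_{X\times X}X$. Since $X$ is smooth, $\Delta_X$ is quasi-smooth; indeed it is a regular embedding in the derived sense, and for stacks it is a quasi-smooth map with relative cotangent complex $\mathbb{L}_X[1]$. Quasi-smoothness is stable under base change, so $p_{12}\times p_{23}$ is quasi-smooth. It is also eventually coconnective, being locally of finite Tor-amplitude. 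The listed properties then give $(p_{12}\times p_{23})^*\simeq\alpha\otimess(p_{12}\times p_{23})^!$, with $\alpha$ being $\otimess$-invertible. Since $q^*$ is quasi-proper and tensoring with an invertible object is an autoequivalence, $(p_{12}\times p_{23})^!\simeq \alpha^{-1}\otimess (p_{12}\times p_{23})^*$ preserves compact objects.

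The step needing care is therefore verifying that $p_{12}\times p_{23}$ is a quasi-smooth (hence eventually coconnective) morphism of stacks. I would reduce this to the smoothness of $X$, which makes $\Delta_X$ quasi-smooth, together with base change. Note that I use only the smoothness of $X$, not that of $Y$. Combining the three observations, the convolution functor sends compact generators to compact objects, and so it is quasi-proper.
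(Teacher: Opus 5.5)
Your proof is correct and is essentially the paper's argument. The paper observes that $p_{13}$ is a base change of the proper map $p$ and that $p_{12}\times p_{23}$ is a base change of the quasi-smooth diagonal $\Delta\colon X\to X\times X$, so $p_{13*}$ and $(p_{12}\times p_{23})^!$ both preserve compact objects; your identification $(p_{12}\times p_{23})^!\simeq\alpha^{-1}\otimess(p_{12}\times p_{23})^*$ makes explicit a step the paper leaves implicit. One small quibble: for a stack the diagonal is not an embedding (e.g.\ $BG\to BG\times BG$), so the right justification is the other one you give, namely that $\mathbb{L}_{X/X\times X}\simeq\mathbb{L}_X[1]$ has the required Tor-amplitude when $X$ is smooth.
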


\begin{proof}
    Observe that $p_{13}$ is a base-change of $p$ which is proper, and $p_{12}\times p_{23}$ is a base-change of $\Delta$ which is quasi-smooth. So both $p_{13*}$ and $(p_{12}\times p_{23})^!$ are quasi-proper.
\end{proof}

\subsection{Monoidal Duals}
We prove the main technical result, which refines \cite{ben-zviCoherentSpringerTheory2024}*{Theorem 3.2.5} by providing a formula for the monoidal duals.
\begin{thm}\label{thm:main}
    The monoidal category $\calH$ is rigid, and hence 2-dualizable. For a compact object $\calK\in\calH$, its monoidal duals are given by
    \begin{align*}
    \calK^R &:= (\bbD 1_{\calH})^{-1}\ast \bbD\sigma\calK,\\
    \calK^L &:= \bbD\sigma\calK\ast (\bbD 1_{\calH})^{-1}.
    \end{align*}
    Here $\sigma:\calH\to\calH^{\op}$ denotes the involution induced by the swap map, and $(\bbD 1_{\calH})^{-1}$ is the convolution-inverse of $\bbD 1_{\calH}$.
\end{thm}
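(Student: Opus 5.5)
We split the proof into three parts: compact generation together with compactness of the unit, an analysis of the object $\bbD 1_{\calH}$, and then verification of the duality formulas through a Hom computation.

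\emph{Rigidity.} The category $\calH=\IndCoh(X\times_YX)$ is compactly generated by $\Coh(X\times_YX)$, and Lemma~\ref{lem:qproper} shows that convolution preserves compact objects. The unit is $1_{\calH}=\delta_*\omega_X$. Since $X$ is smooth, $\omega_X$ is a shifted line bundle and hence coherent. The diagonal $\delta$ is a base change of $\Delta_Y$, so it is schematic; it is also proper, since $p$ is proper and therefore separated. Hence $\delta_*$ is quasi-proper and $1_{\calH}$ is compact. Once the duals are exhibited, \cite{ben-zviCharacterTheoryComplex2015}*{Theorem 3.18} gives 2-dualizability.

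\emph{The object $\bbD 1_{\calH}$.} By the compatibility of $\bbD$ with proper pushforward, $\bbD 1_{\calH}\simeq \delta_*\bbD_X\omega_X\simeq\delta_*\calO_X$. Convolution with $\delta_*\calL$, for $\calL$ a line bundle on $X$, should be computed by base change and the projection formula: it tensors with $p_1^!\calL$ or $p_2^!\calL$, according to the side on which one convolves. Thus $\delta_*\calL^{-1}$ is a two-sided convolution inverse, and in particular $(\bbD 1_{\calH})^{-1}=\delta_*\omega_X^{\otimes 2}$, with the tensor product taken in the $\calO$-sense. This step is routine but requires tracking the $!$-versus-$*$ conventions carefully.

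\emph{Verifying the formulas.} To show that $\calK^R$ is a right dual, it suffices to produce natural equivalences
\[\Hom(\calK\ast\calM,\calN)\simeq\Hom(\calM,\calK^R\ast\calN)\]
for compact $\calM$ and $\calN$. One then promotes these to unit and counit maps by Yoneda, using that compact objects generate; the left dual is handled symmetrically, or by applying $\sigma$.

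The plan is to write
\[\Hom(\calA,\calB)\simeq\Gamma(\bbD\calA\otimess\calB)\]
for compact $\calA$, using Serre duality. We would then express $\bbD(\calK\ast\calM)=\bbD p_{13*}(\cdots)$ as $p_{13*}\bbD(\cdots)$, by properness of $p_{13}$. Next, $\bbD(p_{12}\times p_{23})^!$ becomes $(p_{12}\times p_{23})^*$, which equals $\alpha\otimess(p_{12}\times p_{23})^!$ by quasi-smoothness. Here $\alpha$ is the base change of $\alpha_\Delta$ for $\Delta:X\to X\times X$; this is the step where smoothness of $X$ enters. Applying the projection formula and then integrating over $X\times_YX\times_YX$ should rearrange the expression into
\[\Gamma\bigl(\bbD\calM\otimess (\text{twist})\ast\sigma\bbD\calK\ast\calN\bigr).\]
The swap $\sigma$ appears because the pairing $\Gamma(\calA\otimess\calB)$ on the triple fiber product turns the $(1,2)$-slot of $\calK$ into a kernel acting on the other side. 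The invertible twist $\alpha$, pulled back along the middle projection, is identified with $(\bbD 1_{\calH})^{-1}$ placed on the appropriate side; this is consistent with the earlier computation, since $\alpha_\Delta$ relates to $\omega_X^{\pm1}$.

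\emph{Main obstacle.} We expect the hardest part to be this bookkeeping: identifying exactly which line-bundle twist arises, on which side of the convolution it lands, and why it is $(\bbD 1_{\calH})^{-1}$ rather than $\bbD 1_{\calH}$. It also remains to check that the equivalences are natural enough to assemble into coherent unit and counit maps. The first approach to try is to verify the formula first in the case $Y=\pt$, where $\calH\simeq\IndCoh(X)\otimes\IndCoh(X)$ and duality is explicit, and then to propagate the result by base change along $X\times_YX\to X\times X$.
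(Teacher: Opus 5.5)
Your proposal is correct, but it is organized differently from the paper.

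\textbf{The paper's route.} The paper proves the left-dual adjunction starting from $\Hom_{\calH}(\calL,\calK\ast\calM)$. It Serre-dualizes only $\calL$, then rotates to reach $\Hom_{\calH}(\bbD(\sigma\calK\ast\bbD\calL),\calM)$, and then invokes Lemma~\ref{lem:duality}. That lemma needs a base change over the fourfold fiber product, because the twist $q_2^!\alpha_\Delta$ sits on the middle coordinate and must be turned into a convolution factor. It then identifies $\delta_*\alpha_\Delta\simeq(\bbD1_{\calH})^{-1}$ abstractly, by setting $\calK=1_{\calH}$ and $\calL=\bbD1_{\calH}$.

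\textbf{Your route.} You dualize the convolution $\calK\ast\calM$ first and rotate afterwards. After the rotation, the old middle coordinate becomes the left outer coordinate. So $q_2^!\alpha_\Delta\simeq p_{23}^!p_1^!\alpha_\Delta$ comes out of $p_{23*}$ by the projection formula as $p_1^!\alpha_\Delta\otimess(-)\simeq\delta_*\alpha_\Delta\ast(-)$. This gives $\calK^R\simeq\delta_*\alpha_\Delta\ast\bbD\sigma\calK$ with no fourfold product at all.

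\textbf{Comparison.} Your route is more direct for this theorem. The paper's route buys the general formula for $\bbD(\calK\ast\calL)$, which it reuses to compute $\calK^{LL}$ in Corollary~\ref{cor:pivotal}.

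\textbf{Your flagged obstacle.} It resolves cleanly: $\alpha_\Delta=\Delta^*\omega_{X\times X}\simeq\omega_X^{\otimes 2}$ in the $\calO$-sense, which is exactly the $\otimess$-inverse of $\calO_X$. Hence $\delta_*\alpha_\Delta$ is precisely your $(\bbD1_{\calH})^{-1}=\delta_*\omega_X^{\otimes 2}$, and the twist lands on the left as required. The detour through $Y=\pt$ is unnecessary.

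\textbf{Two small slips.}
\begin{enumerate}
\item $\delta$ is not a base change of $\Delta_Y$; that is the map $X\times_YX\to X\times X$. $\delta$ is schematic and a closed immersion simply because $p$ is schematic and separated.
\item The convolution inverse of $\delta_*\calL$ is $\delta_*$ of the $\otimess$-inverse of $\calL$, not of its $\calO$-inverse. This is in fact what your formula $\delta_*\omega_X^{\otimes 2}$ uses.
\end{enumerate}
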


The key calculation in the proof of Theorem~\ref{thm:main} is the following lemma, which describes how the monoidal structure of $\calH$ interacts with Serre duality $\bbD_{X\times_YX}$.

\begin{lem}\label{lem:duality}
    Given $\calK,\calL\in\calH$, we have natural identifications,
    \[\bbD(\calK\ast\calL)\simeq \bbD\calK\ast (\bbD1_{\calH})^{-1} \ast \bbD\calL.\]
\end{lem}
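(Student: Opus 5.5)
The plan is to compute $\bbD(\calK\ast\calL)$ directly from the definition of convolution, moving $\bbD$ past each functor using the dualities listed in Section~\ref{section:indcoh}. We work first with compact $\calK,\calL$, where $\bbD$ is the Serre duality on $\Coh$; by Lemma~\ref{lem:qproper}, $\calK\ast\calL$ is again compact. Write $m:=p_{12}\times p_{23}$. Then $X\times_YX\times_YX\simeq (X\times_YX)\times_X(X\times_YX)$, so $m$ is the base change of the diagonal $\Delta_X:X\to X\times X$ along $p_2\times p_1$. Since $X$ is smooth, $\Delta_X$ is quasi-smooth, and hence so is $m$.

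\emph{Step 1: move $\bbD$ through the functors.} Since $p_{13}$ is proper, $\bbD p_{13*}\simeq p_{13*}\bbD$. Since $m$ is eventually coconnective, $\bbD m^!\simeq m^*\bbD$. Finally, for coherent objects $\bbD(\calK\boxtimes\calL)\simeq\bbD\calK\boxtimes\bbD\calL$, which follows by comparing the Hom characterizations of $\bbD$ on a product. Together these give
\[\bbD(\calK\ast\calL)\simeq p_{13*}m^*(\bbD\calK\boxtimes\bbD\calL)\simeq p_{13*}\big(\alpha_m\otimess m^!(\bbD\calK\boxtimes\bbD\calL)\big),\]
where the last step uses quasi-smoothness of $m$. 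By the base-change property of $\alpha$, we have $\alpha_m\simeq \pi^!\alpha_{\Delta_X}$, where $\pi:X\times_YX\times_YX\to X$ is projection to the middle factor.

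\emph{Step 2: recognise the twist as a convolution.} For any $\calF\in\IndCoh(X)$, I would show the general identity
\[\calK'\ast\delta_*\calF\ast\calL'\simeq p_{13*}\big(\pi^!\calF\otimess m^!(\calK'\boxtimes\calL')\big).\]
Concretely, $\delta_*\calF\ast\calL'\simeq p_1^!\calF\otimess\calL'$: this follows from base change along $\delta$ together with the projection formula. Convolving the result with $\calK'$ on the left then yields the middle-point twist. Taking $\calF=\alpha_{\Delta_X}$, this gives
\[\bbD(\calK\ast\calL)\simeq\bbD\calK\ast\delta_*\alpha_{\Delta_X}\ast\bbD\calL.\]
The same identity also shows that $\delta_*$ is monoidal from $(\IndCoh(X),\otimess)$ to $(\calH,\ast)$. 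Consequently $\delta_*$ of a $\otimess$-invertible object is convolution-invertible.

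\emph{Step 3: identify $\delta_*\alpha_{\Delta_X}$ with $(\bbD1_\calH)^{-1}$.} We have $\bbD 1_\calH=\bbD\delta_*\omega_X\simeq\delta_*\bbD\omega_X\simeq\delta_*\calO_X$, using that $\delta$ is proper and the remark that $\bbD\omega_X\simeq\alpha_{X\to\pt}\simeq\calO_X$. By Step 2, it suffices to show $\alpha_{\Delta_X}\otimess\calO_X\simeq\omega_X$. Since $\omega_X$ is the $\otimess$-unit, this says that $\alpha_{\Delta_X}$ is the $\otimess$-inverse of $\calO_X$. The route I would take is to compose $\Delta_X$ with the smooth projection $\mathrm{pr}_1:X\times X\to X$. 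The factor $\alpha$ should be multiplicative under composition of quasi-smooth maps, with a pullback, so
\[\alpha_{\Delta_X}\otimess\Delta_X^!\alpha_{\mathrm{pr}_1}\simeq\alpha_{\id}=\omega_X.\]
By base change, $\alpha_{\mathrm{pr}_1}\simeq\omega_X\boxtimes\calO_X$, so $\Delta_X^!\alpha_{\mathrm{pr}_1}\simeq\calO_X$, which is the desired inverse relation. Invertibility of $\calO_X$ itself follows because $X$ is smooth, so $\calO_X$ is a shifted line bundle twist of $\omega_X$.

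I expect Step 3 to be the main obstacle. The composition rule for $\alpha$ is not among the listed properties and must be extracted from $q^*\simeq\alpha_q\otimess q^!$ applied to a composite. The alternative is a direct local computation that $\Delta_X^*\omega_{X\times X}$ is the inverse of $\calO_X$. Keeping the identifications natural, so that the result can later be upgraded to the pivotal structure, also requires care. Finally, the statement for general $\calK,\calL$ is interpreted via the continuous extension of $\bbD$ on compact objects, since every functor involved commutes with filtered colimits.
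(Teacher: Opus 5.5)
Your proof is correct. Steps~1 and~2 follow the paper's own argument. The paper also moves $\bbD$ through $p_{13*}$, $(p_{12}\times p_{23})^!$ and $\boxtimes$, and rewrites the twist as the pullback of $\alpha_\Delta$ along the middle projection using the same Cartesian square. It then recognizes $\bbD\calK\ast\delta_*\alpha_\Delta\ast\bbD\calL$ by one base change, comparing the triple-convolution correspondence with $\id\times\delta\times\id$. Your version (base change along $\delta$ plus the projection formula, giving $\delta_*\calF\ast\calL'\simeq p_1^!\calF\otimess\calL'$, then convolving with $\calK'$) is the same computation in two steps.

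The genuine difference is Step~3, and the paper's argument makes the step you flagged as the main obstacle unnecessary. The paper never computes $\alpha_\Delta$. Once $\bbD(\calK\ast\calL)\simeq\bbD\calK\ast\delta_*\alpha_\Delta\ast\bbD\calL$ holds for compact $\calK,\calL$, it substitutes $\calK=1_\calH$, $\calL=\bbD1_\calH$, and symmetrically $\calK=\bbD1_\calH$, $\calL=1_\calH$. Involutivity of $\bbD$ then gives $1_\calH\simeq\bbD1_\calH\ast\delta_*\alpha_\Delta$ and $1_\calH\simeq\delta_*\alpha_\Delta\ast\bbD1_\calH$. This needs only that $1_\calH$ is compact. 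It uses no composition law for $\alpha$, no identification $\bbD\omega_X\simeq\calO_X$, and no monoidality of $\delta_*$.

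Your route also works as written. The identity $(gf)^*\simeq f^*g^*$ follows from the adjunctions with pushforward. Evaluating $q^*\simeq\alpha_q\otimess q^!$ on $\omega$ then gives $\alpha_{gf}\simeq\alpha_f\otimess f^!\alpha_g$, and base change gives $\alpha_{\mathrm{pr}_1}\simeq\mathrm{pr}_2^!\calO_X$. Your closing remark on invertibility of $\calO_X$ is redundant, since $\alpha_{\Delta_X}\otimess\calO_X\simeq\omega_X$ in a symmetric monoidal category already exhibits the inverse.

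What your version buys is explicitness. It shows $\bbD1_\calH\simeq\delta_*\calO_X$ and $(\bbD1_\calH)^{-1}\simeq\delta_*(\calO_X^{-1})$. It also shows that $\delta_*:(\IndCoh(X),\otimess)\to(\calH,\ast)$ is monoidal. These are exactly the facts the paper uses without proof in the example after Corollary~\ref{cor:pivotal} and in Corollary~\ref{cor:pivotalii}.
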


\begin{proof}
    Expanding the left hand side,
    \begin{align*}
        \bbD(\calK\ast\calL) &\simeq p_{13*}(p_{12}\times p_{23})^*\bbD(\calK\boxtimes\calL)\\
        &\simeq p_{13*}(\alpha_{p_{12}\times p_{23}}\otimess (p_{12}\times p_{23})^!(\bbD\calK\boxtimes\bbD\calL))\\
        &\simeq p_{13*}(q_2^!\alpha_{\Delta}\otimess (p_{12}\times p_{23})^!(\bbD\calK\boxtimes\bbD\calL)).
    \end{align*}
    We refer to Section~\ref{section:indcoh} for the definition of the sheaves denoted $\alpha_{-}$. In the last line, we have $\alpha_{p_{12}\times p_{23}}\simeq q_2^!\alpha_{\Delta}$ by base-change along the following Cartesian square whose vertical maps are quasi-smooth.
    \[\begin{tikzcd}
        {X\times_YX\times_YX} & X \\
        {(X\times_YX)\times (X\times_YX)} & {X\times X}
        \arrow["{q_2}", from=1-1, to=1-2]
        \arrow["{p_{12}\times p_{23}}"', from=1-1, to=2-1]
        \arrow["\Delta", from=1-2, to=2-2]
        \arrow[from=2-1, to=2-2]
        \arrow[from=1-1, to=2-2, phantom, "\square"]
    \end{tikzcd}\]

    Now consider the following diagram where the square is Cartesian. The correspondence on the right is the one which computes the convolution product of three objects.

    \[\begin{tikzcd}
        {X\times_YX\times_YX} & {X\times_YX\times_YX\times_YX} & {X\times_YX} \\
        {(X\times_YX)\times X\times (X\times_YX)} & {(X\times_YX)\times (X\times_YX)\times (X\times_YX)}
        \arrow[from=1-1, to=1-2]
        \arrow["{p_{12}\times q_2\times p_{23}}"', from=1-1, to=2-1]
        \arrow["{p_{14}}", from=1-2, to=1-3]
        \arrow[from=1-2, to=2-2]
        \arrow["{\id\times\delta\times\id}", from=2-1, to=2-2]
        \arrow[from=1-1, to=2-2, phantom, "\square"]
    \end{tikzcd}\]

    Our expression for $\bbD(\calK\ast\calL)$ is obtained by push-pull around the top of the diagram starting with the object $\bbD\calK\boxtimes\alpha_{\Delta}\boxtimes\bbD\calL$ in the bottom left. Going around the bottom by base-change,
    \[\bbD(\calK\ast\calL)\simeq \bbD\calK\ast(\delta_*\alpha_\Delta)\ast\bbD\calL.\]
    Finally, we have the identification
    \[\delta_*\alpha_\Delta\simeq (\bbD1_{\calH})^{-1},\]
    by setting $\calK=1_{\calH},\calL=\bbD1_{\calH}$.
\end{proof}

\begin{proof}[Proof of Theorem~\ref{thm:main}]
    The unit of the monoidal structure $\delta_*\omega_X$ is compact because the diagonal map $\delta:X\to X\times_YX$ of a schematic morphism is a closed embedding and $\omega_X$ is compact.

    We will exhibit the formula for the left dual. The right dual follows by conjugating by $\sigma$. Given $\calL,\calM\in\calH$, we provide functorial identifications,
    \[\Hom_\calH(\calL,\calK\ast\calM)\isomto\Hom_\calH(\calK^L\ast\calL,\calM).\]
    We may assume $\calL$ is compact as these identifications are colimit-preserving in this variable.
    
    We expand the left hand side using Serre duality and ``rotate''.
    \begin{align*}
        \Hom_\calH(\calL,\calK\ast\calM) &\simeq \Gamma\left(\bbD\calL\otimess p_{13*}(p_{12}\times p_{23})^!(\calK\boxtimes\calM)\right)\\
        &\simeq\Gamma\left((p_{13}^!\bbD\calL)\otimess (p_{12}^!\calK)\otimess (p_{23}^!\calM)\right)\\
        &\simeq\Gamma\left((p_{21}^!\sigma\calK)\otimess(p_{13}^!\bbD\calL)\otimess (p_{23}^!\calM)\right)\\
        &\simeq\Gamma\left(p_{23*}(p_{21}\times p_{13})^!(\sigma\calK\boxtimes\bbD\calL)\otimess \calM\right)\\
        &\simeq\Hom_\calH(\bbD(\sigma\calK\ast\bbD\calL),\calM).
    \end{align*}
    Serre duality applies in the last line as $\sigma\calK\ast\bbD\calL$ is compact by Lemma~\ref{lem:qproper}.
    
    Applying Lemma~\ref{lem:duality},
    \[\bbD(\sigma\calK\ast\bbD\calL) \simeq \bbD\sigma\calK\ast(\bbD1_{\calH})^{-1}\ast \calL.\]
    So we have the desired identification,
    \[\Hom_\calH(\calL,\calK\ast\calM)\isomto\Hom_\calH((\bbD\sigma\calK\ast(\bbD1_{\calH})^{-1})\ast\calL,\calM).\]
\end{proof}

\begin{cor}\label{cor:pivotal}
    A pivotal structure on $\calH$ is equivalent to a lift of $\bbD 1_{\calH}$ to a central element of $\calH$.
\end{cor}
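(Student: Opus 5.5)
We compute the double dual $-^{LL}$ directly from the formulas of Theorem~\ref{thm:main}, and then read off what a monoidal trivialization of it amounts to. Write $\calE := \bbD 1_{\calH}$, which is convolution-invertible with inverse $\delta_*\alpha_\Delta$ (see the proof of Lemma~\ref{lem:duality}). Applying the left-dual formula twice gives
\[\calK^{LL} \simeq \bbD\sigma\bigl(\bbD\sigma\calK \ast \calE^{-1}\bigr) \ast \calE^{-1}.\]
To simplify this, I will use three facts:
\begin{itemize}
\item $\sigma$ is an anti-monoidal involution commuting with $\bbD$;
\item $\sigma\calE \simeq \calE$ and $\sigma\calE^{-1}\simeq\calE^{-1}$, because $\delta$ is swap-invariant;
\item Lemma~\ref{lem:duality} controls $\bbD$ of a convolution.
\end{itemize}
These give
\[\bbD\sigma(\bbD\sigma\calK\ast\calE^{-1}) \simeq \bbD(\calE^{-1}\ast\calK) \simeq \bbD(\calE^{-1})\ast\calE^{-1}\ast\bbD\calK.\]
Applying Lemma~\ref{lem:duality} once more, to $1_\calH = \calE^{-1}\ast\calE$, yields $\bbD(\calE^{-1}) \simeq \calE^{2}\ast\calE^{-1}\ast \ldots$. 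I would instead compute $\bbD(\calE^{-1}) = \bbD(\delta_*\alpha_\Delta) = \delta_*\bbD_X\alpha_\Delta$ directly. Since $X$ is smooth, $\alpha_\Delta$ is a shifted line bundle, namely the inverse of $\omega_X$ up to the appropriate twist, and $\bbD_X\alpha_\Delta \simeq \omega_X\otimess\alpha_\Delta^{-1}$. This gives $\bbD(\calE^{-1}) \simeq \calE\ast\calE$, and hence
\[\calK^{LL} \simeq \calE\ast\bbD\bbD\calK\ast\calE^{-1}\simeq \calE\ast\calK\ast\calE^{-1}.\]

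The target statement is therefore that $-^{LL}$ is identified monoidally with conjugation by the invertible object $\calE$. Granting this, a monoidal identification $\calE\ast(-)\ast\calE^{-1}\simeq\id$ is equivalent to a natural monoidal isomorphism $\calE\ast\calK\simeq\calK\ast\calE$. This in turn is exactly the data of a half-braiding on $\calE$, i.e.\ a lift of $\calE$ to the Drinfeld center $\Center(\calH)=\Hom_{\calH\otimes\calH^{\op}}(\calH,\calH)$, which is the meaning of ``central element''. The equivalence between monoidal trivializations of an inner automorphism and half-braidings is standard. The monoidal compatibility of the trivialization corresponds to the hexagon-type coherence of the half-braiding, and higher coherences are governed by the fact that inner automorphisms of $\calH$ form the fiber of the map from invertible objects to monoidal autoequivalences.

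The main obstacle is upgrading the objectwise isomorphism $\calK^{LL}\simeq\calE\ast\calK\ast\calE^{-1}$ to a \emph{monoidal} natural equivalence. I would handle this by not computing object by object. Instead, I would realize $-^{L}$ as the composite of the anti-monoidal functors $\bbD$ (which by Lemma~\ref{lem:duality} is monoidal for the twisted product $\calK\ast_{\calE^{-1}}\calL := \calK\ast\calE^{-1}\ast\calL$) and $\sigma$, followed by right multiplication by $\calE^{-1}$. The twisted product is identified with the original one via $\calK\mapsto \calK\ast\calE^{-1}$. Composing these coherent equivalences twice then produces conjugation by $\calE$ as a monoidal functor, with the identifications $\bbD^2\simeq\id$ and $\sigma^2\simeq\id$ supplying the coherence. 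Ultimately this should be checked at the level of the correspondence-category construction of the monoidal structure, and I expect that verification to carry the real technical weight.
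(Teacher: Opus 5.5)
Your proposal is correct and follows the paper's argument: the left-dual formula of Theorem~\ref{thm:main} together with Lemma~\ref{lem:duality} gives $\calK^{LL}\simeq\calE\ast\calK\ast\calE^{-1}$ with $\calE=\bbD1_{\calH}$, and monoidal trivializations of conjugation by $\calE$ are then identified with central structures on $\calE$ (your attention to making the first identification monoidal goes beyond what the paper spells out). There are two small slips, neither affecting the conclusion: the middle display should read $\bbD(\calE^{-1}\ast\bbD\calK)$, since $\sigma\bbD\sigma\calK\simeq\bbD\calK$; and the smoothness detour is unnecessary, and its formula for $\bbD_X\alpha_\Delta$ is off as written because $\omega_X$ is the $\otimess$-unit. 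Instead, Lemma~\ref{lem:duality} applied to $\calE^{-1}\ast\calE\simeq1_{\calH}$ gives $\calE\simeq\bbD(\calE^{-1})\ast\calE^{-1}\ast\bbD\calE\simeq\bbD(\calE^{-1})\ast\calE^{-1}$, i.e.\ $\bbD(\calE^{-1})\simeq\calE\ast\calE$ directly.
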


\begin{proof}
    We compute $\calK^{LL}$ using Lemma~\ref{lem:duality},
    \[\calK^{LL} \simeq \bbD1_{\calH}\ast\calK\ast(\bbD1_{\calH})^{-1}.\]
    A trivialization of the monoidal autoequivalence $\calK\mapsto\calK^{LL}$ is precisely the structure lifting $\bbD1_{\calH}$ to a central element.
\end{proof}

\begin{eg}
    A natural source of central elements of $\calH$ comes from the monoidal functor $\delta_*p^!:\IndCoh(Y)\to\calH$. These central objects are analogous to scalar matrices in matrix algebras.

    In particular, a choice of $\calE\in\IndCoh(Y)$ together with an identification $p^!\calE\simeq\calO_X$ provides a central lift of $\bbD1_{\calH}\simeq\delta_*\calO_X$.
\end{eg}

\subsection{Calabi-Yau Structures}
The monoidal category $\calH$ in fact has a Calabi-Yau structure of a geometric origin. This is the analog of \cite{ben-zviCharacterTheoryComplex2015}*{Theorem 6.3}.

\begin{thm}\label{thm:CY}
    For any $p:X\to Y$ as in Theorem~\ref{thm:main}, the monoidal category $\calH$ admits a natural Calabi-Yau structure.
\end{thm}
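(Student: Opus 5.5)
The plan is to follow Ben-Zvi–Nadler's argument for the finite Hecke category. First identify the trace of $\calH$ geometrically, then write down an explicit trace functional. Non-degeneracy and $\SO(2)$-equivariance will both be checked through the loop-space description.

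\emph{Identifying the trace.} Since $\calH$ is rigid (Theorem~\ref{thm:main}), $\Tr(\calH)=\calH\otimes_{\calH\otimes\calH^{\op}}\calH$ can be computed by the cyclic bar construction. Using $\IndCoh(X)\otimes\IndCoh(Y')\simeq\IndCoh(X\times Y')$ and base change, we identify each simplicial level with ind-coherent sheaves on the corresponding \v{C}ech-type fibre products. Proper descent for $p$ then gives
\[\Tr(\calH)\simeq\IndCoh(\calL_Y\times_Y X)\quad\text{(more precisely on }X\times_{X\times_Y X}^{\,\mathrm{twisted}}X\simeq \calL Y\times_Y X\text{)},\]
with trace map $\calH\to\Tr(\calH)$ given by $!$-pullback along $\calL Y\times_YX\to X\times_YX$ followed by pushforward. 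Here $\calL Y=Y\times_{Y\times Y}Y$. The "Steinberg"-type stack $\calL Y\times_Y X\simeq X\times_{X\times_YX}X$ is the derived fixed locus of the convolution correspondence. I expect this to go through as in \cite{ben-zviCharacterTheoryComplex2015}, using that $\IndCoh$ convolution categories are rigid so that trace equals the cyclic bar realization.

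\emph{The functional.} Define $\tau:\Tr(\calH)\to\Vect_k$ as $\Gamma$, i.e.\ pushforward to the point, which is canonical. In terms of $\calH$ it is $\calK\mapsto\Gamma(X,\delta^!\calK)$. Since $X$ is smooth, $\alpha_{X\to\pt}\simeq\calO_X$ and $\omega_X$ differ by a shift of a line bundle, and this is exactly where the pivotal data of Corollary~\ref{cor:pivotal} enters. Non-degeneracy means that the pairing $\Tr(\calH)\otimes\Tr(\calH)\to\Vect$ obtained by precomposing with the pair-of-pants multiplication is a perfect pairing. Concretely, it reduces to the self-duality of $\IndCoh$ on the fixed-point stack $X\times_{X\times_YX}X$, which holds by the compactly-generated/Serre-duality properties listed in Section~\ref{section:indcoh}. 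Equivalently, one checks that the induced Serre functor on $\calH$ is trivialized by the central lift of $\bbD1_\calH$. The natural candidate for this lift is $\delta_*p^!$ applied to the canonical relative dualizing data of smooth $Y$, as in the Example following Corollary~\ref{cor:pivotal}.

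\emph{Equivariance (main obstacle).} The genuinely new step is the $\SO(2)$-equivariance of $\tau$. Here $\SO(2)$ acts on $\Tr(\calH)$ by loop rotation $B\bbZ$ acting on $\calL Y$. The global-sections functor on $\IndCoh(\calL Y\times_YX)$ must be shown to be $S^1$-invariant. The first approach I would try is to realize everything in the correspondence category, as $\calH$ is defined there. The trace, together with its circle action, is then $\IndCoh$ of the mapping stack from $S^1$ into the groupoid $X\times_YX\rightrightarrows X$. The functional is pushforward along the canonical map to $\pt$, which is manifestly invariant under reparametrizations of $S^1$. The remaining difficulty is that $!$-versus-$*$ conventions introduce the twist $\alpha$. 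One must check that the trivialization of $\alpha_\Delta$ used above is itself $S^1$-equivariant. Since it comes from the smoothness of $X$ and $Y$ (a constant datum pulled back from $\pt$), the rotation acts trivially on it, which should give the equivariance.
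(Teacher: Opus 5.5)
Your overall shape matches the paper's: identify the trace on a loop space, write an explicit functional, and check $S^1$-equivariance and non-degeneracy. There is, however, a genuine gap in the choice of functional.

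\textbf{The trace and the functional.} Your model $\Tr(\calH)\simeq\IndCoh(LY\times_YX)$ is not right. The stack $LY\times_YX$ is only the zeroth level of the cyclic nerve, and that nerve is the \v{C}ech nerve of the proper map $\pi\colon LY\times_YX\to LY$. Descent therefore lands on $LY$, not on $LY\times_YX$. The paper uses the Ben-Zvi--Nadler--Preygel identification $\Tr(\calH)\simeq\IndCoh_\Lambda(LY)$, under which $\calK\mapsto\pi_*\iota^!\calK$, with $\iota\colon LY\times_YX\to X\times_YX$.

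On either model, ``pushforward to the point'' gives $\calK\mapsto\Gamma(LY\times_YX,\iota^!\calK)$. This is \emph{not} your formula $\Gamma(X,\delta^!\calK)$, and it is degenerate. Already for $X=Y$ a smooth variety of positive dimension, the induced pairing on $\calH=\IndCoh(Y)$ is $(\calK,\calL)\mapsto\Gamma(Y,\iota_*\omega_{LY}\otimess\calK\otimess\calL)$, and its kernel $\iota_*\omega_{LY}$ is not $\otimess$-invertible.

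The missing idea is to restrict to constant loops first: take $\tau=\Gamma(i^!-)$ with $i\colon Y\to LY$. Base change along $Y\times_{LY}(LY\times_YX)\simeq X$ gives $\tau(\calK)\simeq\Gamma(X,\delta^!\calK)\simeq\Hom_\calH(\bbD1_\calH,\calK)$, because the composite $X\to LY\times_YX\xrightarrow{\iota}X\times_YX$ is $\delta$. So your formula on $\calH$ is correct, but it does not come from $\Gamma$ on the trace. It is the global description $\Gamma(i^!-)$ that supplies both the factorization through $\Tr(\calH)$ and the $S^1$-equivariance.

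Equivariance is then immediate. The map $i$ is induced by $S^1\to\pt$, so $Y$ is the constant-loop locus with trivial rotation action. Hence $i^!$ and $\Gamma$ are equivariant, as is the identification of the trace. No analysis of the twist $\alpha$ is needed, and your equivariance argument was about a different functional.

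\textbf{Non-degeneracy.} This is also misformulated. What must be checked is that
$\calH\otimes\calH\xrightarrow{\ast}\calH\to\Tr(\calH)\xrightarrow{\tau}\Vect$
is a perfect pairing. It is not a statement about a pairing on $\Tr(\calH)$ built from a pair-of-pants product. In the framed theory $\Tr(\calH)$ is only a module over $\Center(\calH)$, and a product on it exists only once the orientation is constructed, so your criterion is circular.

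With $\tau=\Gamma(i^!-)$, the check is a base change computation: $\tau(\calK\ast\calL)\simeq\Gamma(X\times_YX,\calK\otimess\sigma\calL)$. This is Serre self-duality of $\IndCoh(X\times_YX)$ up to the swap, hence perfect.

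Finally, no pivotal data enters the Calabi--Yau structure. The functional $\tau^{\opname{CY}}=\Hom_\calH(\bbD1_\calH,-)$ requires no choice. The pivotal functional $\Hom_\calH(1_\calH,-)$ requires a central lift of $\bbD1_\calH$. The two differ by inserting $\bbD1_\calH$, and they agree only given $\omega_X\simeq\calO_X$, as in Corollary~\ref{cor:pivotalii}. Your appeal to the central lift from the Example conflates these two structures.
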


\begin{proof}
    We construct our trace functional as a composition,
    \[\Tr(\calH)\isomto\IndCoh_{\Lambda}(LY)\inclto \IndCoh(LY)\xto{\Gamma(i^!-)}\Vect.\]

    The first map is the identification of \cite{ben-zviSpectralIncarnationAffine2017}*{Theorem 3.3.1}, where $\Lambda$ is a certain singular support condition cutting out a subcategory of $\IndCoh(LY)$. The third map is given by $\Gamma(i^!-)$ where $i:Y\to LY$ is the embedding of constant loops. All three maps are naturally $S^1$-equivariant so the composition is an $S^1$-equivariant trace functional.
    
    The non-degeneracy comes from observing that the composition
    \[\calH\otimes\calH \to \calH \to \Tr(\calH)\to \Vect\]
    is the self-pairing of Serre duality on $X\times_YX$.
\end{proof}

We notice that here we need no extra structure, in contrast to Corollary~\ref{cor:pivotal} where a choice of central lift $\bbD1_{\calH}$ was required. The discrepancy becomes apparent by comparing trace functionals.

Let us write $\tau^{\opname{CY}}$ and $\tau^{\opname{piv}}$ for the trace functionals $\Tr(\calH)\to\Vect$ constructed in Theorem~\ref{thm:CY} and in Corollary~\ref{cor:pivotal} respectively. On objects of $\calH$, we have
\begin{align*}
    \tau^{\opname{CY}}(\calK) &\simeq \Hom_{\calH}(\bbD1_{\calH},\calK),\\
    \tau^{\opname{piv}}(\calK) &\simeq \Hom_{\calH}(1_{\calH},\calK).
\end{align*}
So the weak Calabi-Yau structures arise from different trace functionals.

That said, we can relate these two structures as follows. Any two non-degenerate trace functionals differ by the insertion of an invertible central element $\calZ$. In our situation, this is the object for which there exists an identification
\[\tau^{\opname{piv}}(-)\simeq \tau^{\opname{CY}}(\calZ\ast-).\]
This object is nothing more than $\calZ \simeq \bbD1_{\calH}$ with the given central structure. We record a particular instance of this compatibility under which $\calZ$ is the monoidal unit.

\begin{cor}\label{cor:pivotalii}
    Suppose we are given an identification $\omega_X\simeq\calO_X$. This induces an identification $1_\calH\simeq\bbD1_{\calH}$ and the weak Calabi-Yau structure of Corollary~\ref{cor:pivotal} from the resulting central structure on $\bbD1_\calH$ is compatible with the Calabi-Yau structure of Theorem~\ref{thm:CY}.
\end{cor}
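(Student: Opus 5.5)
The plan is to compare the two trace functionals directly on objects of $\calH$ and then invoke the general principle, recalled just before the statement, that two non-degenerate trace functionals differ by insertion of an invertible central element. So the goal reduces to showing that, under the identification $1_\calH\simeq\bbD1_\calH$ induced by $\omega_X\simeq\calO_X$, the central element $\calZ\simeq\bbD1_\calH$ is identified with the unit $1_\calH$ \emph{as a central object}. Compatibility of the weak Calabi-Yau structures then follows, since $\tau^{\opname{piv}}(-)\simeq\tau^{\opname{CY}}(1_\calH\ast-)\simeq\tau^{\opname{CY}}(-)$.

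\textbf{Step 1: the identification of objects.} Recall that $\bbD_X\omega_X\simeq\calO_X$. Since $\delta$ is a closed embedding, hence proper, we have $\bbD\delta_*\simeq\delta_*\bbD_X$. This gives
\[\bbD1_\calH\simeq\delta_*\bbD_X\omega_X\simeq\delta_*\calO_X,\]
and the given $\omega_X\simeq\calO_X$ then yields $1_\calH=\delta_*\omega_X\simeq\delta_*\calO_X\simeq\bbD1_\calH$.

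\textbf{Step 2: matching the central structures.} This is the main obstacle. For the comparison to work, the identification from Step 1 must be compatible with the central structure, namely the one used in Corollary~\ref{cor:pivotal}, where $\bbD1_\calH$ acquires centrality through the trivialization of $-^{LL}$. My plan is to realize both central structures through the monoidal functor $\delta_*p^!:\IndCoh(Y)\to\calH$ of the preceding example, taking $\calE=\omega_Y$. Then $p^!\omega_Y\simeq\omega_X\simeq\calO_X$, and $\delta_*p^!\omega_Y\simeq1_\calH$ is the unit of a monoidal functor, so its central structure is the tautological one. Because the central structure on $\bbD1_\calH$ in the corollary is the one coming from this example, the two agree.

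\textbf{Step 3: comparing the functionals on objects.} I will check the displayed formulas $\tau^{\opname{CY}}(\calK)\simeq\Hom(\bbD1_\calH,\calK)$ and $\tau^{\opname{piv}}(\calK)\simeq\Hom(1_\calH,\calK)$ by unwinding definitions.
\begin{itemize}
\item For $\tau^{\opname{CY}}$, compute $\Gamma(i^!\,\mathrm{tr}(\calK))$, where $\mathrm{tr}:\calH\to\IndCoh(LY)$ is push-pull along $X\times_YX\leftarrow X\to LY$. Base change along $i:Y\to LY$ reduces this to $\Gamma(\delta^!\calK)\simeq\Hom(\bbD\delta_*\omega_X,\calK)$, using Serre duality for the compact object $\delta_*\omega_X$.
\item For $\tau^{\opname{piv}}$, the trace functional of a pivotal rigid category is $\Hom(1,-)$ by Ben-Zvi–Nadler.
\end{itemize}
Substituting Step 1 shows the two functionals agree on $\calH$. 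Since $\calH\to\Tr(\calH)$ generates under colimits and both functionals are determined by their cyclic structure there, they agree on $\Tr(\calH)$. Step 2 ensures this agreement respects the cyclic (central) data.
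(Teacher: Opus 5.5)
Your proposal is correct and follows essentially the paper's own argument, which is the discussion immediately preceding the corollary: compare $\tau^{\opname{CY}}(\calK)\simeq\Hom_\calH(\bbD1_\calH,\calK)$ with $\tau^{\opname{piv}}(\calK)\simeq\Hom_\calH(1_\calH,\calK)$, note that they differ by insertion of the central element $\bbD1_\calH$, and observe that $\omega_X\simeq\calO_X$ identifies this element with the unit carrying its tautological central structure (your Step 2 is in effect a tautology, since the statement defines the central structure on $\bbD1_\calH$ by transport from the unit). One small slip: the functor $\calH\to\Tr(\calH)\to\IndCoh(LY)$ is push-pull along $X\times_YX\leftarrow X\times_YLY\to LY$, not along $X\times_YX\leftarrow X\to LY$, and with the latter $\Gamma\circ i^!$ would produce $i^!i_*$, which is not the identity; base change along $i:Y\to LY$ of the correct correspondence gives $X\xrightarrow{p}Y$, whose composite map to $X\times_YX$ is $\delta$, and this is what yields your formula $\Gamma(\delta^!\calK)$.
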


\section{Applications}\label{section:applications}
In this section, we prove the main result, Theorem~\ref{thm:introAHC}, on the affine Hecke category. We also outline our application to the 3d mirror symmetry of \cite{gammagePerverseSchobers3d2023} and prove Theorem~\ref{thm:intro3d}.

We will work over the field $k=\bbC$ on both the $A$-side and $B$-side.

\subsection{Local Geometric Langlands}
Let $G$ be a connected reductive group and $B\subseteq G$ a Borel. We recall again the definition of the large affine Hecke category, which is the variant we will work with. See Remark~\ref{rmk:variant} for a discussion on other variants.

\begin{defn}
    The affine Hecke category is the large monoidal category,
    \[\calH^{\aff} := \sh_{\text{nilp}}(I^\circ\bs G((t))/I^\circ),\]
    where $I^\circ$ is the pro-unipotent radical of the Iwahori.
\end{defn}

\begin{thm}\label{thm:AHC}
    The affine Hecke category $\calH^{\aff}$ is 2-dualizable and comes with a pivotal structure and a Calabi-Yau structure which are compatible.
\end{thm}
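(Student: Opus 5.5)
We transport the problem to the spectral side and then apply the general results of Section~\ref{section:conv}. By the universal tame local Betti Langlands equivalence of Dhillon--Taylor \cite{dhillonTameLocalBetti2025}, there is a monoidal equivalence
\[\calH^{\aff}\simeq \IndCoh(X\times_Y X),\qquad X=\widetilde{G^\vee}/G^\vee,\quad Y = G^\vee/G^\vee,\]
where $p:\widetilde{G^\vee}\to G^\vee$ is the multiplicative Grothendieck--Springer resolution, $\widetilde{G^\vee}=\{(g,B')\,:\,g\in B'\}$, and the quotients are by the conjugation action. (If the equivalence is stated with a nilpotent singular support condition, the stacks involved should still be the Steinberg stack $X\times_YX$ with its full $\IndCoh$. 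The first thing to check is that the "nilp" condition on the automorphic side matches the universal, unipotently-monodromic variant, with no extra support condition on the spectral side.) Since all the structures in question transport along monoidal equivalences, it suffices to prove the statement for $\calH=\IndCoh(X\times_YX)$.

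First we verify the hypotheses of the Setting. The stacks $X$ and $Y$ are global quotients of finite type schemes by the finite type affine group $G^\vee$, so they are QCA and of finite type. The group $G^\vee$ is smooth. The variety $\widetilde{G^\vee}$ is smooth, being a vector-bundle-like fibration $G^\vee\times^{B^\vee}B^\vee$ over the flag variety $G^\vee/B^\vee$. The map $p$ is proper and schematic, since it factors through the closed embedding into $G^\vee\times G^\vee/B^\vee$ followed by a projection with projective fiber. Theorem~\ref{thm:main} then gives rigidity, hence 2-dualizability, and Theorem~\ref{thm:CY} gives a natural Calabi-Yau structure $\tau^{\opname{CY}}$.

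For the pivotal structure and its compatibility we invoke Corollary~\ref{cor:pivotalii}, so we need an identification $\omega_X\simeq\calO_X$. Here $\omega_X$ is $\omega_{\widetilde{G^\vee}}$ twisted by the determinant of the adjoint representation, shifted. The adjoint determinant is trivial for reductive $G^\vee$ because $\mathfrak{g}^\vee$ is unimodular, so it suffices to trivialize $\omega_{\widetilde{G^\vee}}$ $G^\vee$-equivariantly. Writing $\widetilde{G^\vee}=G^\vee\times^{B^\vee}B^\vee$, the canonical bundle is induced from the $B^\vee$-character
\[\det(\mathfrak{g}^\vee/\mathfrak{b}^\vee)^{*}\otimes\det(\mathfrak{b}^\vee)^{*},\]
which equals $\det(\mathfrak{g}^\vee)^{*}$ and is trivial. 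Here we use that $B^\vee$ acts on itself by conjugation, so the cotangent of the fiber contributes $\det(\mathfrak{b}^\vee)^{*}$. This gives a $G^\vee$-equivariant trivialization, hence $\omega_X\simeq\calO_X$. Equivalently, in the language of the Example, we take $\calE=\omega_Y\simeq\calO_Y$ (valid since $G^\vee$ is unimodular) and check $p^!\calO_Y\simeq\calO_X$, which holds because $p$ is crepant. Corollary~\ref{cor:pivotalii} then yields a central structure on $\bbD1_\calH\simeq1_\calH$, so by Corollary~\ref{cor:pivotal} it yields a pivotal structure, and its induced weak Calabi-Yau structure is compatible with $\tau^{\opname{CY}}$.

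The main obstacle is the first step: making sure the Dhillon--Taylor equivalence lands precisely in an $\IndCoh$ convolution category of the shape in the Setting, monoidally and with the correct variant. Specifically, we must confirm that it is full $\IndCoh$ on the Steinberg stack, not a singular-support-restricted or completed version. Otherwise Theorem~\ref{thm:main} and Theorem~\ref{thm:CY} would need to be adapted to the subcategory, for instance by checking that the support condition is preserved by $\bbD$, $\sigma$, and convolution. A secondary point is fixing the equivariant trivialization carefully, including the shift, so that the central structure in Corollary~\ref{cor:pivotalii} is genuinely monoidal.
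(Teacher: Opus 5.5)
Your proposal is correct and follows the paper's route: transport along the Dhillon--Taylor equivalence (which the paper, Theorem~\ref{thm:DT}, states as full $\IndCoh$ on the Steinberg stack, so your hedge about a support condition is settled by the citation), check smoothness, QCA and properness, then apply Theorem~\ref{thm:main}, Theorem~\ref{thm:CY} and Corollary~\ref{cor:pivotalii}; your $X=\widetilde{G^\vee}/G^\vee$ is the same stack as the paper's $B^\vee/B^\vee$. The only difference is that you prove $\omega_X\simeq\calO_X$ by a direct equivariant canonical-bundle computation, which is correct, where the paper cites a lemma of Ben-Zvi--Chen--Helm--Nadler. Your worry about monoidality of the central structure is unnecessary, since Corollary~\ref{cor:pivotalii} transports the unit's canonical central structure along any identification $\omega_X\simeq\calO_X$.
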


The key input is the universal variant of Bezrukavnikov's equivalence, which allows us to pass to ind-coherent convolution categories.

\begin{thm}[{\cite{dhillonTameLocalBetti2025}*{Theorem 1.2.5}}]\label{thm:DT}
    There is an equivalence of monoidal categories,
    \[\calH^{\aff}\simeq \IndCoh(X\times_YX),\]
    where $X=B^\vee/B^\vee, Y=G^\vee/G^\vee$, and $p:X\to Y$ is the map given by taking loops on $BB^\vee\to BG^\vee$.
\end{thm}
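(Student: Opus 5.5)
This equivalence is imported from \cite{dhillonTameLocalBetti2025}, so in the paper it is used as a black box. If I had to prove it myself, I would follow the Betti form of Bezrukavnikov's strategy, made universal in the monodromy.

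First, identify the spectral side. Since $X=B^\vee/B^\vee\simeq\Loc_{B^\vee}(S^1)$ and $Y=G^\vee/G^\vee\simeq\Loc_{G^\vee}(S^1)$, the stack $X\times_YX$ is the Betti Steinberg stack of $B^\vee$-local systems on two circles, glued along a common $G^\vee$-local system. Its convolution structure is exactly the one defined in Section~\ref{section:conv}.

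On the automorphic side, I would build a monoidal functor $\IndCoh(X\times_YX)\to\calH^{\aff}$ out of two pieces.
\begin{itemize}
\item The central action of $\IndCoh(Y)$ via nearby cycles, in the style of Gaitsgory. In the Betti form, this is the action of $\Loc_{G^\vee}(S^1)$ coming from monodromy.
\item The Wakimoto (free-monodromic) action of $\IndCoh(X)$, which accounts for the left and right $T^\vee$-monodromies along $I^\circ$.
\end{itemize}
Combining these gives an $\IndCoh(X)\otimes_{\IndCoh(Y)}\IndCoh(X)$-linear action on the unit. By $1$-affineness and the tensor formula $\IndCoh(X)\otimes_{\IndCoh(Y)}\IndCoh(X)\simeq\IndCoh(X\times_YX)$, this yields the functor. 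The tensor formula is valid because $p$ is proper and the stacks are QCA.

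Next, I would prove that the functor is an equivalence by matching generators. On the spectral side I would take the images of coherent sheaves $\calO(\lambda)\boxtimes\calO(\mu)$ twisted along the diagonal. On the automorphic side I would take the free-monodromic tilting sheaves, in particular the big tilting object $\Xi$. Full faithfulness then reduces to a Soergel-type computation. On the automorphic side, $\End(\Xi)$ is computed through the monodromy action. On the spectral side, the matching endomorphism algebra is given by functions on $X\times_YX$ together with their $W$-equivariant structure. The Hom spaces between Wakimoto translates are controlled by the affine Weyl group on both sides.

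Essential surjectivity would follow from generation statements on each side. On the automorphic side, I would use that $\sh_{\text{nilp}}$ is generated by free-monodromic objects. On the spectral side, I would use that $\IndCoh$ is generated by pushforwards from $X$, which holds since $p$ is proper.

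I expect the main obstacle to be twofold: the universality in the monodromy and the choice of renormalization. The classical Bezrukavnikov equivalence deals only with unipotent monodromy, that is, with the formal completion at the unipotent locus. Here $X$ contains all semisimple monodromies. The plan for this is to complete at each semisimple parameter $s$ and reduce, via endoscopy, to the unipotent case for the centralizer $Z_{G^\vee}(s)$. One then has to glue these local equivalences over the coarse quotient, which requires checking compatibility of the monoidal structures across the gluing.

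Separately, the large category requires matching $\IndCoh$, rather than $\opname{QCoh}$, with nilpotent singular support. This amounts to identifying compact objects on the two sides: coherent sheaves on the spectral side and constructible objects on the automorphic side. I would attack this by checking that the tilting generators are compact on both sides and that the resulting Hom spaces are finite over the center.

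Once the theorem is in place, Theorem~\ref{thm:AHC} follows directly from the earlier results.
\begin{itemize}
\item Theorem~\ref{thm:main} gives rigidity, hence $2$-dualizability.
\item Theorem~\ref{thm:CY} gives the Calabi-Yau structure.
\item Corollary~\ref{cor:pivotalii} gives the compatible pivotal structure. Its hypothesis $\omega_X\simeq\calO_X$ holds because $\Loc_{B^\vee}(S^1)$ is the loop space of $BB^\vee$, and such loop stacks are Calabi-Yau.
\end{itemize}
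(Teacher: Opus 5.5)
The paper gives no argument for this statement. It is quoted from \cite{dhillonTameLocalBetti2025} and used purely as a black box, so your first sentence is the whole of the paper's treatment, and your closing remarks on Theorem~\ref{thm:AHC} agree with how it is used there. The sketch you add, however, has a genuine gap at the step that produces the functor: the formula $\IndCoh(X)\otimes_{\IndCoh(Y)}\IndCoh(X)\simeq\IndCoh(X\times_YX)$ is false here. Because $X$ and $Y$ are smooth, the equivalences $\opname{QCoh}\isomto\IndCoh$, $\calF\mapsto\calF\otimes\omega$, identify $\IndCoh(X)$ as a module over $(\IndCoh(Y),\otimess)$ via $p^!$ with $\opname{QCoh}(X)$ as a module over $(\opname{QCoh}(Y),\otimes)$ via $p^*$. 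So your relative tensor product is $\opname{QCoh}(X)\otimes_{\opname{QCoh}(Y)}\opname{QCoh}(X)\simeq\opname{QCoh}(X\times_YX)$ by Ben-Zvi--Francis--Nadler. This sits inside $\IndCoh(X\times_YX)$ only as the part with zero singular support. Moreover, $X\times_YX$ is not smooth for $G$ not a torus: at the identity of $B^\vee$ the map $p$ induces $\mathfrak{b}^\vee\hookrightarrow\frakg^\vee$ on tangent spaces, so the fiber product has obstruction space $\frakg^\vee/\mathfrak{b}^\vee\neq0$ at that point of the diagonal. Concretely, the convolution unit $\delta_*\omega_X$ is coherent but not perfect. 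It is compact in $\IndCoh(X\times_YX)$, as in the proof of Theorem~\ref{thm:main}, but the unit $\delta_*\calO_X$ of the convolution category $\opname{QCoh}(X\times_YX)$ is not compact. So the functor you obtain from an action on the unit has a non-rigid source, which is not monoidally equivalent to $\IndCoh(X\times_YX)$; it cannot be the equivalence in the statement.

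Properness of $p$ does not repair this. Passing from the tensor product to all $\opname{QCoh}(Y)$-linear colimit-preserving endofunctors of $\opname{QCoh}(X)$ still gives $\opname{QCoh}(X\times_YX)$. What properness and smoothness of $Y$ buy is the small statement of Ben-Zvi--Nadler--Preygel: $\Coh(X\times_YX)$ is identified with $\opname{Perf}(Y)$-linear exact endofunctors of $\Coh(X)$, and $\IndCoh(X\times_YX)$ is its ind-completion. A correct construction has to keep track of compact objects in this way. For instance, you could let compact objects of $\calH^{\aff}$ act by compact-preserving functors on a small category identified with $\Coh(X)$.

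The same conflation breaks your essential surjectivity step. Pullbacks of $\calO(\lambda)\boxtimes\calO(\mu)$ to $X\times_YX$ are perfect, so they generate only the $\opname{QCoh}(X\times_YX)$ part. Pushforwards along $\delta$ are supported on the diagonal, so they cannot generate under colimits either. Properness of $p$ has no bearing on generation. A generating family for $\IndCoh(X\times_YX)$ must contain coherent sheaves with nonzero singular support, and the Hom computations matching tilting objects must be done in $\Coh$ rather than $\opname{Perf}$. You defer the $\IndCoh$ versus $\opname{QCoh}$ issue to a final renormalization step, but as written your construction has already landed in the singular-support-zero category by its second step. This distinction is exactly what makes the large category rigid, and the rest of the paper relies on that. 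It therefore has to be built into the construction of the functor from the start rather than patched at the end.
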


\begin{proof}[Proof of Theorem~\ref{thm:AHC}]
    We continue to use notation of Theorem~\ref{thm:DT}. The stacks $X,Y$ are smooth QCA stacks and $p$ is proper, so that $\calH^{\aff}$ is 2-dualizable by Theorem~\ref{thm:main}. We equip $\calH^{\aff}$ with a natural Calabi-Yau structure by Theorem~\ref{thm:CY}. Moreover, there is an isomorphism $\calO_X\simeq\omega_X$ by \cite{ben-zviCoherentSpringerTheory2024}*{Lemma 3.12}. By Corollary~\ref{cor:pivotalii}, $\calH^{\aff}$ has a natural pivotal structure which is compatible with its Calabi-Yau structure.
\end{proof}

\begin{cor}\label{cor:centerAHC}
    There is a natural identification of the center and trace of $\calH^{\aff}$,
    \[\Center(\calH^{\aff})\isomfrom\Tr(\calH^{\aff}).\]
    In particular, $\Center(\calH^{\aff})\simeq\IndCoh_{\calN}(\Loc_{G^\vee}(T^2))$, where $T^2=S^1\times S^1$, and where $\calN$ is the nilpotent cone.
\end{cor}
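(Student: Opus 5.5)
The plan is to deduce both claims from Theorem~\ref{thm:AHC} together with the general formalism of Section~\ref{section:tft}, and then to identify the trace explicitly.

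\textbf{Step 1: center equals trace.} By Theorem~\ref{thm:AHC}, $\calH^{\aff}$ is 2-dualizable and carries a Calabi-Yau structure, meaning an $\SO(2)$-equivariant non-degenerate trace functional $\tau:\Tr(\calH^{\aff})\to\Vect$. By \cite{lurieClassificationTopologicalField2009}*{Remark 4.2.7}, this data extends the framed 2TQFT with value $\calH^{\aff}$ to an oriented 2TQFT $\calZ$. In $\catname{Bord}^{\opname{or}}_2$ the framed circles $S^1_{\Center}$ and $S^1_{\Tr}$ become the same oriented circle, which yields the identification $\Center(\calH^{\aff})\simeq\Tr(\calH^{\aff})$.

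To make this concrete rather than abstract, I would write the map $\Tr(\calH^{\aff})\to\Center(\calH^{\aff})$ as the one induced by the non-degenerate pairing from $\tau$. Non-degeneracy identifies $\Tr$ with the dual of $\Tr$ twisted by $\tau$, and the center is $\Hom_{\calA\otimes\calA^{\op}}(\calA,\calA)$, which rigidity (Theorem~\ref{thm:main}) identifies with the dual of the trace. For this direction the weak Calabi-Yau structure alone suffices, which explains the arrow's direction in the statement.

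\textbf{Step 2: computing the trace.} Transport along the monoidal equivalence of Theorem~\ref{thm:DT} to get $\Tr(\calH^{\aff})\simeq\Tr(\IndCoh(X\times_YX))$, with $X=B^\vee/B^\vee$ and $Y=G^\vee/G^\vee$. The proof of Theorem~\ref{thm:CY} already invokes \cite{ben-zviSpectralIncarnationAffine2017}*{Theorem 3.3.1}, which gives $\Tr(\calH)\simeq\IndCoh_\Lambda(LY)$. Since $Y=G^\vee/G^\vee=\Loc_{G^\vee}(S^1)$, we have $LY=\Loc_{G^\vee}(T^2)$.

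Alternatively, as in the introduction, one can compute the trace of the small category $\Coh(X\times_YX)$ as $\Coh_{\calN}(\Loc_{G^\vee}(T^2))$ and use that trace commutes with ind-completion, since both are colimit constructions.

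\textbf{Main obstacle.} The step needing care is matching the singular support condition $\Lambda$ from \cite{ben-zviSpectralIncarnationAffine2017} with the nilpotent cone $\calN$ for this particular $p$ obtained from loops on $BB^\vee\to BG^\vee$. I would cite their explicit computation for the Steinberg-type situation, where $\Lambda$ is the image of the singular supports propagated from $X\times_YX$, namely the nilpotent singular codirections. Once that is done, combining it with Step 1 gives the stated formula.
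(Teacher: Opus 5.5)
Your main line is the paper's proof. The (weak) Calabi--Yau, equivalently pivotal, structure of Theorem~\ref{thm:AHC} identifies $\Center(\calH^{\aff})$ with $\Tr(\calH^{\aff})$, and the trace comes from Ben-Zvi--Nadler--Preygel. The paper cites \cite{ben-zviSpectralIncarnationAffine2017}*{Theorem 4.4.1} directly, and that result already contains the identification of $\Lambda$ with $\calN$ that you single out as the main obstacle.

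Your ``concrete'' paragraph needs one correction: it assigns the inputs to the wrong steps.

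\emph{Rigidity does not give the untwisted center.} Rigidity (Theorem~\ref{thm:main}) does not by itself identify $\Center(\calA)=\Hom_{\calA\otimes\calA^{\op}}(\calA,\calA)$ with $\Tr(\calA)^\vee\simeq\Hom_{\calA\otimes\calA^{\op}}(\calA,\calA^\vee)$. The pairing $(a,b)\mapsto\Hom(1_\calA,a\ast b)$ identifies $\calA^\vee$ with $\calA$ only after twisting one action by the double dual. The reason is that $\Hom(1_\calA,a\ast b)\simeq\Hom(a^L,b)$, whereas $\Hom(1_\calA,b\ast a)\simeq\Hom(a^R,b)$. So rigidity alone only exhibits $\Tr(\calA)^\vee$ as a twisted center.

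\emph{What $\tau$ actually supplies.} The twist is removed exactly by the non-degenerate trace functional, or by the pivotal structure $-^{LL}\simeq\id$. Non-degeneracy of $\tau$ means the pairing $\calA\otimes\calA\to\Tr(\calA)\xrightarrow{\tau}\Vect$ is a bimodule equivalence $\calA\simeq\calA^\vee$, and this yields $\Center(\calA)\simeq\Tr(\calA)^\vee$. It is not a statement identifying $\Tr$ with its own dual. That self-duality holds for any 2-dualizable $\calA$, since the cylinder-framed circle is self-dual.

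The direction of the arrow in the statement carries no content, since the map is an equivalence. Your first paragraph is already a complete argument, so these corrections only affect the optional elaboration.
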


\begin{proof}
    The center and trace of $\calH^{\aff}$ are identified by the pivotal or Calabi-Yau structure. It is shown in \cite{ben-zviSpectralIncarnationAffine2017}*{Theorem 4.4.1} that the trace of $\calH^{\aff}$ is identified with $\IndCoh_{\calN}(\Loc_{G^\vee}(T^2))$.
\end{proof}

This torus $T^2$ decomposes into two copies of $S^1$, one coming from having taken the trace/center, and one coming from the loop in the loop group. We therefore expect that the $\calE_2$-monoidal structure on this category comes from pairs of pants on the first copy of $S^1$. Betti geometric Langlands predicts an automorphic description of the center, as the Betti automorphic category on an elliptic curve, $\sh_{\calN}(\Bun_G(E))$.

\begin{rmk}\label{rmk:gaitsgory}
    It is interesting to compare our calculation of the center to the construction of Gaitsgory's central functor which is a functor from the Satake category to $\calH^{\aff}$. It is shown in \cite{nadlerAutomorphicGluingFunctor2023} that the Satake category can be realized as the automorphic category on a genus zero curve. One might expect that this is related to our elliptic curve $E$ via a nodal degeneration.
\end{rmk}

\begin{rmk}\label{rmk:variant}
    We make a note of some other variants of affine Hecke category for which Theorem~\ref{thm:AHC} holds by presenting them as ind-coherent convolution categories.
    \begin{enumerate}
        \item If we take $X=\tilde{\calN}/G^{\vee}\to Y=\frakg^\vee/G^\vee$, where $\tilde{\calN}$ denotes the Springer resolution, we obtain the ind-completion of the affine Hecke category in the original equivalence of Bezrukavnikov \cite{bezrukavnikovTwoGeometricRealizations2016}.
        \item In the above, we may replace $G^\vee$ with $G^\vee\times\bbG_m$, where $\bbG_m$ acts by scaling. On the automorphic side this is expected to correspond to a mixed version of the affine Hecke category. This variant admits decategorifications to the classical affine Hecke algebra, see \cite{ben-zviCoherentSpringerTheory2024}*{Remark 1.1.4}.
    \end{enumerate}
\end{rmk}

\subsection{3d Mirror Symmetry}
Convolution categories naturally arise when considering 3d $B$-models, one model of which is Rozansky-Witten theory. Conjecturally this assigns to an algebraic symplectic variety, a 3-dimensional TQFT or equivalently a 3-dualizable 2-category of boundary conditions.

We can approximate these 2-categories as module categories for monoidal 1-categories by probing with testing objects. This allows us to decrease the categoricity, at the cost of not seeing the whole 2-category. For probes of a geometric origin, the resulting monoidal category is an ind-coherent convolution category.

The affine Hecke category arises in this way, by probing the stack $G^\vee/G^\vee$ by $B^\vee/B^\vee$. Recent work of Ben-Zvi, Nadler and Stefanich \cite{ben-zviPotentCategoricalRepresentations2025} proposes a precise definition of the full 2-category.

Another example arises from an instance of hypertoric 3d mirror symmetry due to Gammage, Hilburn and Mazel-Gee \cite{gammagePerverseSchobers3d2023}. The objects in the $A$-side 2-category are modelled by perverse schobers, which are a categorification of perverse sheaves. This is denoted below by $2\Perv$. Their mirror symmetry is an equivalence of 2-categories,
\[2\Perv(\bbC,0)\simeq \Mod(\calA).\]
Here, $\calA:=\IndCoh(X\times_YX)$ where $X=(\bbA^1\sqcup 0)/\bbG_m$, $Y=\bbA^1/\bbG_m$, and so our results apply.

\begin{rmk}\label{rmk:approx}
    Another cost of approximating our 3TQFT by monoidal categories is that this may decrease dualizability. This is why the TQFTs which arise from Theorem~\ref{thm:introAHC}, Theorem~\ref{thm:intro3d} are only 2-dimensional and assign vector spaces to manifolds of top dimension as opposed to numbers.
\end{rmk}

\begin{rmk}\label{rmk:boundaryconditions}
    Local geometric Langlands arises via an $S^1$-compactification of the maximally supersymmetric 4-dimensional gauge theory.

    Similarly, the hypertoric 3d mirror symmetry arises by compactifying the 4-dimensional $U(1)$-gauge theory along a certain interval decorated with boundary conditions. The $A$-side is the pairing of the Tate boundary condition with the Dirichlet boundary condition,
    \[(\bbA^1/\bbG_m)\times_{\pt/\bbG_m}(\bbG_m/\bbG_m)\simeq\bbC.\]
    On the $B$-side, we see dually, the pairing of the Tate boundary condition with the Neumann boundary condition,
    \[(\bbA^1/\bbG_m)\times_{\pt/\bbG_m}(\pt/\bbG_m)\simeq\bbA^1/\bbG_m.\]
    These pairs of boundary conditions are exchanged by relative Langlands duality \cite{ben-zviRelativeLanglandsDuality2024}.
\end{rmk}

\begin{thm}\label{thm:schob}
    The 2-category $2\Perv(\bbC,0)$ is 2-dualizable with a natural Calabi-Yau structure. In particular, there is a natural identification of the center of $2\Perv(\bbC,0)$ with its trace.
\end{thm}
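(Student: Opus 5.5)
The plan is to transport the statement across the mirror symmetry equivalence $2\Perv(\bbC,0)\simeq\Mod(\calA)$ of \cite{gammagePerverseSchobers3d2023}. Here $\calA=\IndCoh(X\times_YX)$ with $X=(\bbA^1\sqcup 0)/\bbG_m$, $Y=\bbA^1/\bbG_m$, and $p:X\to Y$ is induced by the identity on $\bbA^1$ together with the inclusion of the origin. The first step is to observe that $\Mod(\calA)$ is the 2-category that the Morita 2-category $\catname{S}$ attaches to the object $\calA$. Dualizability, the Calabi-Yau structure, and the center and trace are Morita invariant, intrinsic notions. So it suffices to prove that $\calA$ is a 2-dualizable object of $\catname{S}$ carrying a Calabi-Yau structure, and then to read the theorem off through the equivalence. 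I would state this reduction as a short lemma: 2-dualizability and $\SO(2)$-equivariant nondegenerate trace functionals on $\calA$ correspond to the same data on its 2-category of modules.

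The second step is to verify the hypotheses of Section~\ref{section:conv} for $p$. The stacks $X$ and $Y$ are global quotients of finite type schemes by $\bbG_m$. They are therefore QCA and almost of finite type, as in the example in Section~\ref{section:indcoh}. Both are smooth, since $\bbA^1$ and a point are smooth and quotients by a smooth group preserve smoothness. The map $p$ is schematic, because it is induced from a $\bbG_m$-equivariant map of schemes $\bbA^1\sqcup 0\to\bbA^1$. That map is a finite morphism (identity plus a closed point), so it is proper, and hence $p$ is proper.

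With the hypotheses in place, Theorem~\ref{thm:main} gives that $\calA$ is rigid and hence 2-dualizable, and Theorem~\ref{thm:CY} equips it with a natural Calabi-Yau structure. The identification $\Center\simeq\Tr$ then follows formally from the oriented TQFT, as explained in Section~\ref{section:tft}.

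As an optional refinement paralleling Theorem~\ref{thm:AHC}, I would also check $\omega_X\simeq\calO_X$, so that Corollary~\ref{cor:pivotalii} provides a compatible pivotal structure. On the component $\bbA^1/\bbG_m$ the canonical bundle is the weight-one twist coming from $dx$ together with a $\bbG_m$ factor. This is not obviously trivial, so I would not claim it. The theorem as stated needs only Theorem~\ref{thm:CY}, so this step can be omitted.

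The main obstacle I expect is the Morita-transport step. The GHM equivalence must be an equivalence of 2-categories compatible enough that $2\Perv(\bbC,0)$, regarded as an object of the appropriate symmetric monoidal 3-category, or as the 2-category of modules, has dualizability governed by $\calA$ in $\catname{S}$. I would handle this by interpreting ``$2\Perv(\bbC,0)$ is 2-dualizable'' to mean, as the paper implicitly does for $\calH^{\aff}$, that the corresponding object $\calA$ of $\catname{Alg}(\catname{St}_k)$ is. Center and trace are then computed as $\Hom_{\calA\otimes\calA^{\op}}(\calA,\calA)$ and $\calA\otimes_{\calA\otimes\calA^{\op}}\calA$. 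These depend only on $\Mod(\calA)$, because they are Hochschild invariants and hence Morita invariant.
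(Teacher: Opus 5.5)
Your proposal is correct and is essentially the paper's proof. You pass through \cite{gammagePerverseSchobers3d2023}*{Theorem B} to $\calA=\IndCoh(X\times_YX)$, check that $X,Y$ are smooth QCA and $p$ is proper, and apply Theorem~\ref{thm:main} and Theorem~\ref{thm:CY}, reading off the result by Morita invariance as the paper does implicitly; you are also right not to invoke $\omega_X\simeq\calO_X$, since the paper likewise claims only the Calabi-Yau structure here.
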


\begin{proof}
    We pass to the ind-coherent convolution category $\calA$ via \cite{gammagePerverseSchobers3d2023}*{Theorem B}. The conditions of Theorem~\ref{thm:main} are easy to verify, giving us the 2-dualizability of $\calA$. Theorem~\ref{thm:CY} provides the Calabi-Yau structure.
\end{proof}

The main theorem of \cite{gammageBettiTatesThesis2025}, provides an $A$-side description of the trace of $\calA$,
\[\sh_\calS\left(\bbC((t))\right)\simeq \IndCoh(L(\bbA^1/\bbG_m)).\]
We deduce from Theorem~\ref{thm:schob} that these are also identified with the center. In particular, both sides admit a natural $\calE_2$-structure. We expect that the $\calE_2$-structure on $A$-side comes from a factorization structure on $\sh_\calS\left(\bbC((t))\right)$.

Going further, we are also at the liberty to evaluate our 2-dimensional theories on closed oriented 2-manifolds. In light of Remark~\ref{rmk:boundaryconditions}, and using language of \cite{ben-zviRelativeLanglandsDuality2024}, we expect the evaluations on these 2-manifolds to be the pairing of the Tate period sheaf with the Whittaker/Dirichlet period sheaf on the $A$-side, and of the Tate $L$-sheaf with the point/Neumann $L$-sheaf on the $B$-side.

\bibliography{ref2}

\end{document}